\documentclass[11pt]{amsart}
 
\overfullrule0pt

\textwidth 16cm                 
\textheight 22cm                
\evensidemargin 6mm             
\oddsidemargin 6mm              
\topmargin 5mm                  
\setlength{\parskip}{1.5ex}     

\theoremstyle{plain}
\newtheorem{thm}{Theorem}[section]
\newtheorem{theorem}[thm]{Theorem}

\newtheorem{corollary}[thm]{Corollary}
\newtheorem{proposition}[thm]{Proposition}

\newtheorem{remark}[thm]{Remark}
\newtheorem{remarks}[thm]{Remarks}

\numberwithin{equation}{section}




\title{Some observations on the properness of Identity plus linear powers} 
\author{Tuyen Trung Truong}
\subjclass[2010]{13B25, 14Qxx, 14Rxx, 15Bxx}
\keywords{Druzkowski maps; Groebner basis; Jacobian conjecture; Properness}
\date{\today}
\address{University of Oslo, 0316 Oslo, Norway }\email{tuyentt@math.uio.no}
\pagestyle{myheadings} 

\begin{document}

\begin{abstract}
\noindent 

 For $2$ vectors $x,y\in \mathbb{R}^m$, we use the notation $x * y =(x_1y_1,\ldots ,x_my_m)$, and if $x=y$ we also use the notation $x^2=x*x$ and define by induction $x^k=x*(x^{k-1})$. We use $<,>$ for the usual inner product on $\mathbb{R}^m$. For $A$ an $m\times m$ matrix with coefficients in $\mathbb{R}$, we can assign a map $F_A(x)=x+(Ax)^3:~\mathbb{R}^m\rightarrow \mathbb{R}^m$. A matrix $A$ is Druzkowski iff $det(JF_A(x))=1$ for all $x\in \mathbb{R}^m$.   

Recently, Jiang Liu posted a preprint on arXiv asserting a proof of the Jacobian conjecture, by showing the properness of $F_A(x)$ when $A$ is Druzkowski, via some inequalities in the real numbers. In the proof, indeed Liu asserted the properness of $F_A(x)$ under more general conditions on $A$, see the main body of this paper for more detail. 

Inspired by this preprint, we research in this paper on the question of to what extent the above maps $F_A(x)$ (even for matrices $A$ which are not Druzkowski) can be proper. We obtain various necessary conditions and sufficient conditions for both properness and non-properness properties. A complete characterisation of the properness, in terms of the existence of  non-zero solutions to a system of polynomial equations of degree at most $3$, in the case where $A$ has corank $1$, is obtained. Extending this, we  propose a new conjecture, and discuss some applications  to the (real) Jacobian conjecture. We also consider the properness of more general maps $x\pm (Ax)^k$ or $x\pm A(x^k)$. By a result of Druzkowski, our results can be applied to all polynomial self-mappings of $\mathbb{C}^m$ or $\mathbb{R}^m$.  
\end{abstract}

\maketitle


\section{Introduction and results}

In this paper we discuss the properness of a special class of maps defined on the field $\mathbb{R}$, inspired by the Jacobian Conjecture. Interestingly, the fact that $\mathbb{R}$ is well-ordered, together with the fact that every real number has a unique real third-root, gives an apparent advantage of considering the properness on the field $\mathbb{R}$ over on the field $\mathbb{C}$.  

The plan of this paper is as follows. In the first subsection we recall known facts about the Jacobian conjecture, as well as some reductions to Druzkowski maps and injectivity or properness of maps. In the second and third subsections, we provide various necessary conditions and sufficient conditions for the properness of maps of the same type as Druzkowski maps, but applied to a general matrix $A$ (need not be a Druzkowski matrix). We then propose a new conjecture, which asserts that the non-properness of $F_A(x)$ is equivalent to that a system of polynomial equations (of degree at most $3$) has a non-zero solution. The number of equations and variables in the system are independent of the matrix $A$. In the fourth subsection we discuss some applications  to the Jacobian conjecture. The fifth subsection extends the results to even more general maps. Because of a result by Druzkowski \cite{druzkowski}, which states that all self-polynomials on $\mathbb{C}^m$ or $\mathbb{R}^m$ can be reduced to a map of the form considered in this paper, our results can be applied to all polynomial self-maps with real or complex coefficients. 

\subsection{Jacobian conjecture, Druzkowski matrices, injectivity and properness}

We recall that the famous Jacobian Conjecture is the following statement:

\noindent
{\bf Jacobian Conjecture.} Let $F=(F_1,\ldots ,F_m):\mathbb{C}^m\rightarrow \mathbb{C}^m$ be a polynomial map such that $JF$ (the Jacobian matrix $(\partial F_i/\partial x_j)_{1\leq i,j\leq m}$) is invertible at every point. Then, $F$ has a polynomial inverse. 

The Jacobian conjecture was first stated by Keller in 1939. Polynomial maps with invertible Jacobian matrices are called Keller maps. We denote by $JC(m)$ the Jacobian Conjecture in dimension $m$, and by $JC(\infty )$ the statement that $JC(m)$ holds for every $n$. In the literature $JC(\infty )$ is usually called the generalized Jacobian Conjecture. This conjecture has attracted a lot of works, and many partial results were found. For example,  Magnus - Applegate -Onishi -Nagata proved $JC(2)$ for $F=(P,Q)$ where the GCD of the degrees of $P,Q$ is either a prime number or $\leq 8$;  Moh proved $JC(2)$  for $\deg (F)\leq 100$; Wang proved that $JC(n)$ holds if $\deg (F)=2$; and Yu \cite{yu} (also Chau and Nga \cite{chau-nga}) proved that if $F(X)-X$ has no linear term and has all non-positive coefficients then JC holds for $F$. For more details the readers can consult the reference list and the references therein. An excellent survey is the book \cite{essen}. We note that the $\mathbb{R}$-analog of $JC(2)$ (in this case, we require only that the map $F$ is bijective, since its inverse may not be a polynomial as the example $F(x)=x+x^3$ shows) is false, by the work of Pinchuk (Section 10 in \cite{essen}). 

There have been many reductions of the generalised Jacobian Conjecture $JC(\infty )$. One of these reductions is due to Bass, Connell, Wright and Yagzhev, who showed that to prove $JC(\infty )$, it is enough to prove for all   $F(x)=x+H(x)$ and all $n$, where $H(x)$ is a homogeneous polynomial of degree $3$ (Section 6.3 in \cite{essen}). Druzkowski made a further simplification (Section 6.3 in \cite{essen})  

\begin{theorem}[Druzkowski]
$JC(\infty )$ is true if it is true for all the maps $F$ of the form $F(x)=(x_1+l_1(x)^3,\ldots ,x_m+l_m(x)^3)$ with invertible Jacobian $JF$, here $l_1,\ldots ,l_m$ are linear forms. 
\label{TheoremDruzkowski}\end{theorem}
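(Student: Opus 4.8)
The plan is to reduce from degree-3 homogeneous maps to the Druzkowski form by a change-of-variables trick that introduces auxiliary coordinates, at the cost of raising the dimension. I assume, as stated in the excerpt, that $JC(\infty)$ follows from establishing $JC$ for all maps $F(x) = x + H(x)$ with $H$ homogeneous of degree $3$ and $JF$ invertible. So let me start from such an $F$ on $\C^m$, with $H = (H_1, \ldots, H_m)$ and each $H_i$ a cubic form.

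The first step is to express each cubic form $H_i$ in terms of cubes of linear forms. Over a field of characteristic $0$ (or at least $\neq 2,3$), any cubic form in $m$ variables is a linear combination of cubes of linear forms — concretely, using the polarization identity one writes monomials $x_jx_kx_\ell$ in terms of expressions like $(x_j+x_k+x_\ell)^3$, $(x_j+x_k)^3$, etc. So I can write $H_i(x) = \sum_{s=1}^{N} c_{is}\, \ell_s(x)^3$ for finitely many linear forms $\ell_1, \ldots, \ell_N$ and scalars $c_{is}$, where $N$ depends only on $m$. Collecting, $H(x) = C\cdot (Lx)^3$ in the paper's notation, where $L$ is the $N\times m$ matrix with rows $\ell_s$ and $C$ is $m\times N$.

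The second step — the heart of the argument — is to introduce the new variable $y = Lx \in \C^N$ and consider the map on $\C^{m+N}$, or better, work purely in $\C^N$-type coordinates. The standard Druzkowski move is: set $n = m + N$ (or a suitable size), and build an $n\times n$ matrix $\widetilde{A}$ so that the map $G(z) = z + (\widetilde A z)^3$ on $\C^n$ "contains" $F$ as the action on an invariant affine subspace, and such that $JG$ invertible is equivalent to $JF$ invertible. One writes $z = (x, y)$, arranges $\widetilde A$ in block form so that the cubic correction in the $x$-block reproduces $C(Lx)^3 = H(x)$ and the cubic correction in the $y$-block is compatible with the substitution $y = Lx$; a short computation with the chain rule shows $\det J G(z) = \det JF(x)$ (up to the obvious identifications), so $G$ is a Keller map precisely when $F$ is. Then $JC$ for $G$ in dimension $n$ gives polynomial invertibility of $G$, and restricting to the invariant subspace $\{y = Lx\}$ yields polynomial invertibility of $F$. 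Hence $JC(m)$ holds, and since $m$ was arbitrary, $JC(\infty)$ follows.

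The main obstacle — and the place where care is genuinely needed — is arranging the block matrix $\widetilde A$ so that simultaneously (i) the induced map on the diagonal-type subspace $\{y = Lx\}$ is exactly $F$, (ii) this subspace is actually invariant under $G$, and (iii) the Jacobian determinant is preserved, i.e. $\det J\widetilde A$-type conditions force $\det JG \equiv \det JF$ as polynomials in the base variables. Getting all three at once typically requires choosing $\widetilde A$ nilpotent-like or at least with a specific zero-block structure (e.g. the correction must be of the form "linear in $x$ only" in the right components), and one must check the invertibility transfers in both directions. I would handle this by first treating the case where $H(x)$ is a single monomial cube $\ell(x)^3$ to fix the block pattern, then assembling the general case by direct sum / concatenation, and finally verifying the Jacobian identity by a direct chain-rule computation. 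The bookkeeping of indices is the tedious part, but no deep new idea is required beyond the substitution $y = Lx$.
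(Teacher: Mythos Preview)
The paper does not prove this theorem at all: it is quoted as a known background result, with a reference to Section~6.3 of van den Essen's book \cite{essen} (and Druzkowski's original paper \cite{druzkowski}). So there is no ``paper's own proof'' to compare against.

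That said, your strategic outline matches the standard route in the literature: start from the Bass--Connell--Wright--Yagzhev cubic-homogeneous reduction, write each cubic form as a linear combination of cubes of linear forms, and then pass to a higher-dimensional map of Druzkowski type whose invertibility controls that of the original $F$. Where your sketch is thin is exactly the point you flag yourself: the explicit construction of $\widetilde A$ and the verification that the Keller condition and invertibility transfer correctly. Your proposed mechanism --- restricting $G(z)=z+(\widetilde A z)^3$ to an invariant subspace $\{y=Lx\}$ --- does not obviously work as stated, because the $x$-block of $(\widetilde A z)^3$ is componentwise a cube of a single linear form, whereas $H(x)=C(Lx)^3$ is a \emph{linear combination} of such cubes; so a naive block choice of $\widetilde A$ will not reproduce $H$ on that subspace. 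The actual Druzkowski construction handles this by a different stabilization (introducing one auxiliary coordinate per linear form $\ell_j$ and building the pairing so that invertibility of the enlarged map is equivalent to that of $F$), and the Jacobian identity requires a specific nilpotency/block-triangular structure rather than a generic block ansatz. Your instinct that ``no deep new idea is required beyond the substitution'' undersells the combinatorics; if you want to turn this into a proof, consult Druzkowski's 1983 paper or van den Essen \S6.3 for the precise matrix.
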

Later, Druzkowski \cite{druzkowski} simplified even further showing that it is enough to show for the above maps with the additional condition that $A^2=0$, where $A$ is the $n\times n$ matrix whose $i$-th row is $l_i$. We then simply say that a matrix is Druzkowski if the corresponding map $F_A(x)=(x_1+l_1(x)^3,\ldots ,x_m+l_m(x)^3)$ is Keller, i.e. the determinant of its Jacobian is $1$.

A lot of efforts have been devoted to showing that the Druzkowski maps are polynomial automorphisms (Section 7.1 in \cite{essen} and for recent developments see  \cite{bondt-yan1,bondt-yan2,bondt-yan3}, and for a comprehensive reference on this topic see \cite{bondt}).  There are many partial results proved for this class of maps, for example it is known from the works of Druzkowski, Hubbers, Yan and many other people that the Druzkowski maps are polynomial automorphisms if either the rank of $A$ is $\leq 4$ or the corank of $A$ is $\leq 3$. In particular, the Jacobian conjecture was completely checked for Druzkowski maps in dimensions $\leq 8$ (see \cite{bondt}, also for stronger properties that can be proved for these polynomial automorphisms). Some new results on Druzkowski maps in dimension $9$ have been obtained recently in \cite{yan} and \cite{bondt-yan1,bondt-yan2,bondt-yan3}. A common theme of these proofs is that in these cases the Druzkowski maps are "equivalent" to some other polynomial maps for which the Jacobian conjecture is previously known to be true. It is not easy to see whether this strategy can work for higher ranks or coranks.

We now present a compact form of maps including Druzkowski maps. Let $\mathbb{K}=\mathbb{R}$ or $\mathbb{C}$. For $2$ vectors $x\in \mathbb{K}^m$, we use the notation $x * y =(x_1y_1,\ldots ,x_my_m)$, and if $x=y$ we also use the notation $x^2=x*x$ and define by induction $x^k=x*(x^{k-1})$. If all coordinates of $x$ are non-zero, then we can define $x^{-k}$ to be the vector such that $(x^{-k})*(x^k)=(1,\ldots ,1)$. We use $<,>$ for the usual inner product on $\mathbb{K}^m$. For $A$ an $m\times m$ matrix with coefficients in $\mathbb{R}$, we can assign a map $F_A(x)=x+(Ax)^3:~\mathbb{K}^m\rightarrow \mathbb{K}^m$. We call these maps $F_A$ to be of type identity plus cubic. A matrix $A$ is Druzkowski iff $det(JF_A(x))=1$ for all $x\in \mathbb{R}^m$. 
  
For a Druzkowski map on $\mathbb{C}$, it is known (see \cite{essen}) that $F_A(x)$ is an automorphism iff it is injective. In \cite{truong}, the author showed that a map $F_A(x)$ (for a general matrix $A$ need not be Druzkowski)  is injective iff the equations $Z+A(Z^3)+A(Z*(AY)^2)=0$ only have solutions $Z=0$. This turns out to provide a very quick method to check whether a Druzkowski map is injective by computers. By using that for every triple $(A,Y,Z)$ satisfying the equation $Z+A(Z^3)+A(Z*(AY)^2)=0$, another triple $(A',Y',Z')$ with $A'=DAD^{-3}, Y'=D^3Y$ and $Z'=D^3Z$, with $D$ an invertible diagonal matrix, also satisfies the same equation, we can reduce the truth of $JC(m)$ to the following: 

{\bf Conjecture A:} For $Z=(1,\ldots ,1,0,\ldots ,0)$ (with $1\leq k\leq m$ entries being $1$), there is no Druzkowski matrix $A$ and vector $Y$ so that $Z+A(Z^3)+A(Z*(AY)^2)=0$.

We also proposed some generalisations of the Jacobian conjecture in \cite{truong}, applying to all matrices and not just Druzkowski matrices. We showed that these new conjectures are satisfied for a generic matrix of a given rank. 

Recently, Liu posted a preprint \cite{liu} asserting a proof of Jacobian conjecture. He proceeded to proving the following 3 assertions: (i) The Jacobian conjecture can be reduced to proving that: for a Druzkowski matrix $A$ with real coefficients, the map  $F_A(x):~\mathbb{R}^m\rightarrow \mathbb{R}^m$ is injective; (ii) By Hadamard's theorem, the injectivity of the map  $F_A(x):~\mathbb{R}^m\rightarrow \mathbb{R}^m$ is reduced to its properness; (iii) The properness of  the map  $F_A(x):~\mathbb{R}^m\rightarrow \mathbb{R}^m$ can be proven by using some simple inequalities on real numbers. 

Liu's argument for (i) is as follows. A Druzkowski map $F_A(x)$ on $\mathbb{C}$ is an automorphism iff it is injective. Now, if we write in terms of real and imaginary parts (note that the real and imaginary parts of $F_A(x)$ are Yaghzev's maps, with real coefficients, and hence can be transformed to a Druzkowski map with real coefficients), we reduce checking the injectivity of a Druzkowski map on $\mathbb{C}^m$ to the injectivity of a Druzkowski map on $\mathbb{R}^{2m}$.  

We recall that a map $F$ is proper iff the preimage of compact sets are compact. Equivalently, a map $F$ is proper if for every unbounded sequence $\{x_n\}$ we have $\{F(x_n)\}$ is unbounded. Hardamard's theorem, mentioned above, is the following hard-to-believe result (see \cite{gordon, nijenhuis-richardson}): 

\begin{theorem} Let $f:\mathbb{R}^m\rightarrow \mathbb{R}^m$ be a $C^1$ map so that $det (Jf)$ never vanishes. Then $f$ is a $C^1$-diffeomorphism iff $f$ is proper.  
\label{TheoremHadamard}\end{theorem}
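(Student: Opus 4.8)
The statement splits into an easy direction and a hard one. \emph{Easy direction} ($C^1$-diffeomorphism $\Rightarrow$ proper): if $f$ is a $C^1$-diffeomorphism, then $f^{-1}$ is continuous, so for any compact $K\subseteq\R^m$ the preimage $f^{-1}(K)$ is the image of $K$ under the continuous map $f^{-1}$, hence compact; thus $f$ is proper. This half uses nothing about $\det(Jf)$.

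\emph{Hard direction} ($f$ proper and $\det(Jf)$ nowhere zero $\Rightarrow$ $C^1$-diffeomorphism): First I would apply the inverse function theorem to conclude that $f$ is a $C^1$ local diffeomorphism, so that $f$ is an open map and each fiber $f^{-1}(y)$ is a discrete subset of $\R^m$. The core of the plan is to show that a proper local diffeomorphism of $\R^m$ is a covering map. For surjectivity, $f(\R^m)$ is open since $f$ is open, and closed since if $f(x_n)\to y$ then $\{y\}\cup\{f(x_n):n\ge1\}$ is compact, so properness makes some subsequence of $(x_n)$ converge to a point $x$ with $f(x)=y$; connectedness of $\R^m$ then gives $f(\R^m)=\R^m$. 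Each fiber $f^{-1}(y)$ is compact (by properness) and discrete, hence finite, say $f^{-1}(y)=\{x_1,\dots,x_k\}$; choosing pairwise disjoint open sets $U_i\ni x_i$ with $f|_{U_i}$ a diffeomorphism onto an open set, I would then show, again via properness, that some open $V\ni y$ satisfies $f^{-1}(V)\subseteq\bigcup_i U_i$ (otherwise $z_n\to y$ and $w_n\in f^{-1}(z_n)\setminus\bigcup_i U_i$ would subconverge to a point of $f^{-1}(y)$ outside $\{x_1,\dots,x_k\}$), after which a routine shrinking of $V$ and of the $U_i$ exhibits $V$ as evenly covered. Thus $f$ is a covering map; since $\R^m$ is simply connected and the total space $\R^m$ is connected, this covering has a single sheet, i.e.\ $f$ is a homeomorphism. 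Finally, a homeomorphism all of whose local inverses are $C^1$ (as furnished by the inverse function theorem) has a $C^1$ global inverse, so $f$ is a $C^1$-diffeomorphism.

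I expect the main obstacle to be the construction of the evenly covered neighborhood $V$, which is the one place where properness is genuinely indispensable; the remaining ingredients are either the inverse function theorem or soft covering-space theory. If one prefers to avoid covering spaces, an equivalent route is to prove directly that every path $\gamma:[0,1]\to\R^m$ lifts through $f$: the set of $t$ for which a lift exists on $[0,t]$ is open by the local diffeomorphism property and closed by properness, since a partial lift stays inside the compact set $f^{-1}(\gamma([0,1]))$; hence all paths lift, and then the monodromy theorem together with $\pi_1(\R^m)=0$ forces injectivity of $f$. An injective continuous open map is a homeomorphism, which is upgraded to a $C^1$-diffeomorphism exactly as above. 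In either approach it is the simple connectedness of $\R^m$ that pins the inverse down to a single value.
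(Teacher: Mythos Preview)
Your proof is correct and is essentially the standard argument for Hadamard's global inverse function theorem: show that a proper local $C^1$-diffeomorphism of $\R^m$ is a covering map, then use simple connectedness of $\R^m$ to conclude it is one-sheeted. The alternative path-lifting version you sketch is equally valid and is in fact the approach in some of the classical references.

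However, note that the paper does \emph{not} give its own proof of this theorem. It is stated as a known result and attributed to the literature (Gordon; Nijenhuis--Richardson), so there is nothing to compare your argument against within the paper itself. Your write-up would serve perfectly well as a self-contained justification, but for the purposes of this paper the theorem is simply quoted as background.
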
     
 
Liu proved the properness of maps $F_A(x)$ under the following assumption (which is satisfied in case $A$ is a Druzkowski matrix): $A$ is a matrix so that for all $\lambda \in \mathbb{R}$, the equation $F_{\lambda A}(x)=0$ has only one solution $x=0$.  Here, he uses some inequalities on the real numbers, however since the arguments are a bit complicated and un-natural, we still cannot yet confirm whether this last step in \cite{liu} is correct.    

\begin{remarks}It can be easily checked that a matrix $A$ satisfies the property used by Liu iff for $\lambda =\pm 1$, the equation $F_{\lambda A}(x)=0$ has only one solution $x=0$. We note that the set $\mathcal{Z}=$ $\{A:$ $F_A(x)=0$ has only the unique solution $x=0\}$ is constructible, since whose complement is a projection of the real algebraic variety $\{(A,x,s):$ $F_A(x)=0$ and $s||x||^2-1=0\}$. Hence, the set of matrices considered by Liu, which is exactly $\{A:~A,~-A\in \mathcal{Z}\}$, is constructible.  

The idea of showing properness of maps satisfying the Jacobian condition has been used before, for example in \cite{chamberland-meisters} where the idea of Mountain-pass theorem has been used. 
\end{remarks}

A general reader may be reluctant to dive deeply into the preprint \cite{liu}, given the bad reputation of Jacobian conjecture having many false claimed proofs. The fact that Liu proves the properness of $F_A(x)$ under more general assumptions than being Druzkowski could just add more suspect. The purpose of this paper is to convince the readers that the fact $F_A(x)$ is proper may not be too special. While we cannot confirm whether $F_A(x)$ is proper when $A$ is Druzkowski, we do show that $F_A(x)$ is proper for almost all matrices - for example, this is valid when $A$ is invertible or symmetric or anti-symmetric. For small dimensions $m=1,2$, actually $F_A(x)$ is proper for all matrices $A$. Indeed, the readers can readily check with the case $m=1$. The case $m=2$ is more difficult, and is a special case of Theorem \ref{Theorem1}. Moreover, we provide various necessary conditions and sufficient conditions, in terms of the existence of non-zero solutions to certain systems of polynomial equations, for $x+A(x^3)$ to be non-proper. A complete characterisation of the properness, in  the case where $A$ has corank $1$, is obtained. Extending this, we state Conjecture C. We will also consider properness of maps of the form $x\pm (Ax)^{k}$ or $x\pm A(x^{k})$. We hope that these results could encourage readers to check in detail whether \cite{liu} is correct.  

Before stating and proving these results, we present some preliminary results. For an $m\times m$ matrix $A$ with real coefficients, which can be thought of as a linear map $A:~\mathbb{R}^m\rightarrow \mathbb{R}^m$, we denote by $Ker(A)=\{x:~Ax=0\}$ the kernel of $A$ and $Im(A)=\{z:~z\in A(\mathbb{R}^m)\}$ the image of $A$. We denote by $A^T$ the transpose of $A$. We have the following Kernel-Image theorem in Linear Algebra: $Im(A^T)$ and $Ker(A)$ are orthogonal with respect to the inner product $<,>$, and $\mathbb{R}^m=Ker(A)\oplus Im(A^T)$. As a consequence, if $z\in Im(A^T)$ is non-zero, then $Az\not=0$. [To check this last claim, one can simply write $z=A^Tu$ and $<Az,u>=<z,A^Tu>=<z,z>$.] It seems that \cite{liu} is the first paper where the Kernel-Image theorem has been cleverly applied to the Jacobian conjecture.  

\subsection{Properness of Identity plus linear cubics} The next result gives useful criterions to checking the properness of $F_A(x)$. 

\begin{proposition} Let $A$ be an $m\times m$ matrix with real coefficients and $F_A(x)=x+(Ax)^3: ~\mathbb{R}^m\rightarrow \mathbb{R}^m$. 

1)  If $F_A(x)$ is not proper, then there is $x$ so that $A.(Ax)^3=0$ but $Ax\not=0$. Moreover, we can choose such an $x$ in $Im(A^T)$. 

2) $F_A(x)$ is proper iff  $F_B(x)$ is a proper, where $B=DAD^{-3}$ for some invertible diagonal matrix $D$. 

3) $F_A(x)$ is proper iff the map $\widehat{F}_A(x):=x+A(x^3)$ is proper on $Im(AA^T)$. 

\label{MainTheorem}\end{proposition}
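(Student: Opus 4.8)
The proof plan is to establish each of the three parts in turn, with part 3) being the one requiring the most care.

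\textbf{Part 1).} The plan is to unwind the definition of non-properness. If $F_A$ is not proper, there is an unbounded sequence $\{x_n\}$ with $\{F_A(x_n)\}$ bounded. Writing $F_A(x_n) = x_n + (Ax_n)^3$, boundedness of the left side forces $(Ax_n)^3 = -x_n + O(1)$, so $\|(Ax_n)^3\|$ grows like $\|x_n\|$, hence $\{Ax_n\}$ is also unbounded. First I would rescale: set $y_n = x_n/\|x_n\|$, pass to a convergent subsequence $y_n \to y$ with $\|y\| = 1$. Dividing $F_A(x_n) = x_n + (Ax_n)^3$ by $\|x_n\|$ gives $\|x_n\|^{-1}F_A(x_n) = y_n + \|x_n\|^2 (Ay_n)^3 \to 0$, and since $y_n \to y$ is bounded, this forces $\|x_n\|^2 (Ay_n)^3 \to -y$. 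Looking coordinatewise: in coordinate $i$, $\|x_n\|^2 (Ay_n)_i^3 \to -y_i$. If $(Ay)_i \neq 0$ then the left side blows up unless $y_i$ were infinite — contradiction — so actually we need to be a little more careful: $(Ay_n)_i \to (Ay)_i$, and if this limit is nonzero then $\|x_n\|^2(Ay_n)_i^3 \to \pm\infty$, contradicting convergence to $-y_i$. Hence $(Ay)_i = 0$ whenever... this shows $Ay$ could be zero, which is not yet what we want. The right approach instead is to note that from $\|x_n\|^2(Ay_n)^3 \to -y$ and $y \neq 0$, at least one coordinate $y_i \neq 0$; for that coordinate $(Ay_n)_i^3 \sim -y_i/\|x_n\|^2 \to 0$, so $(Ay)_i = 0$. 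More globally one sets $w_n = Ax_n$ (unbounded) and considers $w_n/\|w_n\|$; after passing to a subsequence $w_n/\|w_n\| \to w$, $\|w\|=1$, $w \in \overline{Im(A)} = Im(A)$. From $x_n = F_A(x_n) - (Ax_n)^3 = O(1) - w_n^3$ we get $x_n/\|w_n\|^3 \to -w^3$, hence $Ax_n/\|w_n\|^3 \to -Aw^3$, i.e. $w_n/\|w_n\|^3 \to -Aw^3$; but $w_n/\|w_n\|^3 = (w_n/\|w_n\|)\cdot\|w_n\|^{-2} \to 0$. Therefore $A(w^3) = 0$ with $w = \lim w_n/\|w_n\| \neq 0$. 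Now $w \in Im(A)$, so $w = Ax$ for some $x$; replacing $x$ by its projection onto $Im(A^T)$ (which does not change $Ax$ by the Kernel-Image theorem) gives $x \in Im(A^T)$ with $Ax = w \neq 0$ and $A(Ax)^3 = A(w^3) = 0$, as claimed.

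\textbf{Part 2).} This is a change of variables. The plan is to observe that $F_B(u) = u + (Bu)^3 = u + (DAD^{-3}u)^3$. Substituting $u = D^3 x$ and using that for a diagonal matrix $D$ one has $(Dv)^3 = D^3(v^3)$ (both the $*$-power and $D$ act coordinatewise), one computes $F_B(D^3 x) = D^3 x + (DA x)^3 = D^3 x + D^3 (Ax)^3 = D^3 F_A(x)$. Thus $F_B = D^3 \circ F_A \circ D^{-3}$ is conjugate to $F_A$ by the linear isomorphism $D^3$; since conjugation by a linear isomorphism preserves properness (it maps bounded/unbounded sequences to bounded/unbounded sequences and commutes with taking compact preimages), $F_A$ is proper iff $F_B$ is. This is essentially the same rescaling trick already used in \cite{truong} and recalled in the excerpt.

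\textbf{Part 3).} This is the substantive part. The idea is that the cubic perturbation $(Ax)^3$ only ``sees'' $x$ through $Ax$, so the dynamics off $Im(AA^T)$ are trivial. Concretely, decompose $\mathbb{R}^m = Ker(A) \oplus Im(A^T)$ by the Kernel-Image theorem; on $Im(A^T)$ the map $A$ is injective. Write $x = x_0 + x_1$ with $x_0 \in Ker(A)$, $x_1 \in Im(A^T)$. Then $Ax = Ax_1$, so $F_A(x) = x_0 + x_1 + (Ax_1)^3$; the $Ker(A)$-component of $F_A(x)$ is just $x_0$ (as $(Ax_1)^3$ need not lie in $Im(A^T)$, one must take components carefully — so instead write $F_A(x) - x = (Ax_1)^3$, which depends only on $x_1$). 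The first step is: $F_A$ is proper iff its restriction-type behaviour on $Im(A^T)$ controls everything. Precisely, if $\{x_n = x_{0,n} + x_{1,n}\}$ is unbounded, then either $\{x_{0,n}\}$ is unbounded — in which case the $Ker(A)$-part of $F_A(x_n)$, which contains $x_{0,n}$ plus the $Ker(A)$-component of $(Ax_{1,n})^3$... hmm, this coupling is the thing to handle. The cleanest route: show $F_A$ proper $\iff$ the map $G: Im(A^T) \to \mathbb{R}^m$, $G(x_1) = x_1 + (Ax_1)^3$ is proper as a map into $\mathbb{R}^m$. Then transport $G$ onto $Im(AA^T)$ via the isomorphism $A: Im(A^T) \xrightarrow{\sim} Im(A)$, noting $A(Im(A^T)) = Im(AA^T)$: for $z = Ax_1 \in Im(AA^T)$ one has $A G(x_1) = Ax_1 + A((Ax_1)^3) = z + A(z^3) = \widehat{F}_A(z)$. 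So $G$ proper $\iff \widehat{F}_A|_{Im(AA^T)}$ proper, using again that pre- and post-composing with linear isomorphisms (here $A$ restricted, and projections) preserves properness. The main obstacle, and the step I would spend the most care on, is the first equivalence: proving rigorously that properness of $F_A$ on all of $\mathbb{R}^m$ is equivalent to properness of $G$ on the subspace $Im(A^T)$, i.e. that the $Ker(A)$-direction contributes nothing problematic. The key observation making this work is that $F_A(x) - x$ lies in the fixed subspace $W := \text{span of }Im(A^T)\text{ under cubing}$... more honestly: from $x_n + (Ax_{1,n})^3$ bounded, project onto $Ker(A)$ along $Im(A^T)$ using the orthogonal projection $P_0$: $P_0 F_A(x_n) = x_{0,n} + P_0((Ax_{1,n})^3)$, and onto $Im(A^T)$: $P_1 F_A(x_n) = x_{1,n} + P_1((Ax_{1,n})^3)$. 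If $G$ is proper then $\{x_{1,n} + (Ax_{1,n})^3\}$ bounded $\Rightarrow \{x_{1,n}\}$ bounded $\Rightarrow \{(Ax_{1,n})^3\}$ bounded $\Rightarrow$ from $P_0 F_A(x_n)$ bounded, $\{x_{0,n}\}$ bounded $\Rightarrow \{x_n\}$ bounded, so $F_A$ proper; conversely if $F_A$ proper and $\{x_{1,n}\} \subset Im(A^T)$ is unbounded then $\{x_n = x_{1,n}\}$ is unbounded, so $\{F_A(x_{1,n}) = G(x_{1,n})\}$ is unbounded, so $G$ proper. This closes the argument.
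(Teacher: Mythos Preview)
Your arguments for Parts 1) and 2) are correct and essentially match the paper's (in Part 1 you normalise $Ax_n$ rather than the $Im(A^T)$-component of $x_n$, but this is an inessential variation).

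Part 3), however, has a genuine gap. Your route factors through the claim that $F_A$ is proper iff $G:Im(A^T)\to\mathbb{R}^m$, $G(x_1)=x_1+(Ax_1)^3$, is proper, and then that $G$ proper iff $\widehat F_A|_{Im(AA^T)}$ is proper. Both equivalences are problematic in the same direction. For the first: in your chain ``$G$ proper $\Rightarrow$ $F_A$ proper'' you start from ``$\{x_{1,n}+(Ax_{1,n})^3\}$ bounded'', but this does \emph{not} follow from $\{F_A(x_n)\}$ bounded. We only know $F_A(x_n)=x_{0,n}+G(x_{1,n})$ is bounded, and since $G(x_{1,n})$ need not lie in $Im(A^T)$, the kernel component $x_{0,n}$ can cancel an unbounded part of $G(x_{1,n})$; your projections $P_0,P_1$ give $P_1 F_A(x_n)=x_{1,n}+P_1((Ax_{1,n})^3)$ bounded, which is \emph{not} $G(x_{1,n})$. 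For the second: you write $AG(x_1)=\widehat F_A(Ax_1)$ and invoke ``post-composing with linear isomorphisms preserves properness'', but the post-composition here is by $A$ acting on all of $\mathbb{R}^m$ (the codomain of $G$), where $A$ is not an isomorphism. So ``$G$ proper $\Rightarrow$ $AG$ proper'' is unjustified, and indeed $G(y_n)$ could escape to infinity purely in the $Ker(A)$-direction.

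The paper avoids this by skipping $G$ and using directly the intermediate condition ``there is no unbounded $y_n\in Im(A^T)$ with $A(y_n+(Ay_n)^3)=O(1)$'', i.e.\ properness of $A\circ G$ rather than of $G$. This is immediately equivalent to properness of $\widehat F_A$ on $Im(AA^T)$ via the genuine isomorphism $A|_{Im(A^T)}:Im(A^T)\to Im(AA^T)$. The nontrivial direction of the equivalence with $F_A$ (if $A\circ G$ is not proper then $F_A$ is not proper) is handled by an explicit construction: choose bounded $v_n$ with $Av_n=A(y_n+(Ay_n)^3)$, set $x_n=v_n-(y_n+(Ay_n)^3)\in Ker(A)$, and then $z_n=x_n+y_n$ is unbounded with $F_A(z_n)=v_n$ bounded. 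This absorption into $Ker(A)$ is exactly the step your argument is missing.
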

\begin{proof}

1) Assume that $F_A(x)$ is not proper. Then, there is a sequence $\{z_n\}$ so that $||z_n||\rightarrow\infty$ but $F_A(z_n)$ is bounded. We write as before $z_n=x_n+y_n$, where $x_n\in Ker(A)$ and $y_n\in Im(A^T)$. Then, from $F_A(z_n)=x_n+y_n+(Ay_n)^3$ and the assumption that $F_A(z_n)$ is bounded and $\{z_n\}$ is unbounded, we obtain that $||y_n||\rightarrow\infty$. We can assume that $\lim _{n\rightarrow\infty}y_n/||y_n||=y_{\infty}$ exists and is non-zero. Therefore $A(y_{\infty})\not=0 $  since $0\not= y_{\infty}\in Im(A^T)$. Thus (using that $A(x_n)=0$ for all $n$)
\begin{eqnarray*}
0=\lim _{n\rightarrow\infty} A(\frac{F_A(z_n)}{||y_n||}) = A(y_{\infty})+\lim_{n\rightarrow\infty}||y_n||^2A((Ay_n/||y_n||)^3). 
\end{eqnarray*}

Since $\lim _{n\rightarrow\infty}||y_n||=\infty$ and $\lim _{n\rightarrow\infty}Ay_n/||y_n||=Ay_{\infty}$, it follows that $A((Ay_{\infty})^3)=0$.

2)  We show that $F_A$ is proper  if and only if $F_B$ is proper, where $B=DAD^{-3}$ for an invertible diagonal matrix $D$. Indeed, for every $x\in \mathbb{R}^m$ and an invertible diagonal matrix $D$, we define $z=D^3x$. Then, from the fact that $D$ is {\bf diagonal}, we have as in \cite{truong}:
\begin{eqnarray*}
B((Bz)^3)=DAD^{-3}((DAx)^3)=DA((Ax)^3).
\end{eqnarray*}
It is also easy to check that $Bz=DAx$. Therefore, $F_B(z)=DF_A(x)$. Since $D$ is invertible, it follows that if $z_n=D^3x_n$, then $||x_n||\rightarrow \infty$ iff $||z_n||\rightarrow \infty$, and $||F_A(x_n)||\rightarrow \infty$ iff $||F_B(z_n)||\rightarrow \infty$. Therefore, $F_A(x)$ is proper iff $F_B(x)$ is proper. 

3) We recall the big-O notation: if $\{v_n\}$ is a sequence of vectors and $\{r_n\}$ is a sequence of positive numbers, then we write $v_n=O(r_n)$ if there is a constant $C>0$ so that $||v_n||\leq Cr_n$ for all $n$. 

We first show the following: The map $F_A(x)$ is proper iff there is no sequence $y_n\in Im(A^T)$ so that $||y_n||\rightarrow \infty$ and $A(y_n+(Ay_n)^3) =O(1)$.

Proof of $\Rightarrow$: This is exactly the proof of part 1).

Proof of $\Leftarrow$: Assume that there is a sequence $y_n\in Im(A^T)$ so that $||y_n||\rightarrow 0$ and $A(y_n+(Ay_n)^3) =O(1)$. By linear algebra, there is $v_n$ so that $A(v_n)=A(y_n+(Ay_n)^3)$ and $||v_n||=O(||A(y_n+(Ay_n)^3)||)=O(1)$. (We recall how to prove this simple Linear Algebra fact. We can, by using changes of coordinates, bring $A$ to the normal Jordan form. Then, we can assume further that $A$ is a Jordan block. In this case, the conclusion is easy to check.) We define $x_n=v_n-(y_n+(Ay_n)^3)$. Then $x_n\in Ker(A)$. We define $z_n=x_n+y_n$. Since $||z_n||^2=||x_n||^2+||y_n||^2\geq ||y_n||^2$, it follows that $||z_n||\rightarrow \infty$. Also, $z_n+(Az_n)^3=x_n+y_n+(Ay_n)^3=v_n$ is bounded. Thus $F_A(x)$ is not proper.  

Now we finish the proof of part 3). We note that there are constants  $C_1,C_2>0$ such that $C_1||y||\leq ||Ay||\leq C_2||y||$ for all $y\in Im(A^T)$. Hence, if $y_n\in Im(A^T)$ is any sequence so that $||y_n||\rightarrow\infty$, then $u_n=Ay_n\in Im(AA^T)$ also satisfies $||u_n||\rightarrow\infty$. By computation, we find that
\begin{eqnarray*} 
A(y_n+(Ay_n)^3)=(Ay_n)+A((Ay_n)^3)=u_n+Au_n^3.
\end{eqnarray*}
Therefore, the properness of $F_A(x)$ is equivalent to that of the map $x+Ax^3$ on $Im(AA^T)$. 

\end{proof}

\begin{remarks}We note that the criterion in part 1 of Proposition \ref{MainTheorem} is not sufficient for $F_A$ to be not proper. Here is an example communicated to us by Jiang Liu. Consider the following $5\times 5$ matrix $A$: 
\[ \left( \begin{array}{ccccc}
0&0&1&0&0\\
0&0&0&1&0\\
0&0&0&0&1\\
0&0&0&0&0\\
0&0&0&0&0\\
\end{array}\right) \]
Then $F_A(x)=x+(Ax)^3$ is an automorphism, and hence is proper. On the other hand, with $x=[0,0,1,1,0]$ we have $Ax=[1,1,0,0,0]$ is non-zero, while $A.(Ax)^3=A.[1,1,0,0,0]=0$.

However, see the next subsection where we provide various {\bf algebraic conditions} for the map $F_A$ to be non-proper.   
\label{RemarkMap5Dim}\end{remarks}

{\bf Definition 1.} For clarity of the exposition, we define $\mathcal{P}=\{A:$ $F_A(x)=x+(Ax)^3$ is proper$\}$. 

{\bf Remark.} The following observation, similar to that for the matrices considered in \cite{liu}, is easy to check:  If $A,-A\in \mathcal{P}$, then $\lambda A\in \mathcal{P}$ for all $\lambda \in \mathbb{R}$.

\begin{corollary}
For any given $0\leq r\leq m$, $\mathcal{P}$ contains an open dense set of the set of matrices of rank exactly $r$.   
\label{CorollaryDensity}\end{corollary}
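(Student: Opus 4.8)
The plan is to derive the statement from part~1 of Proposition~\ref{MainTheorem} by a dimension count on an incidence variety. Fix $0\le r\le m$ and let $M_r\subseteq\mathbb{R}^{m\times m}$ be the set of matrices of rank exactly $r$; it is a smooth real algebraic manifold of pure dimension $r(2m-r)$, so any semialgebraic subset of $M_r$ of dimension $<r(2m-r)$ is nowhere dense. By Proposition~\ref{MainTheorem}(1), if $A\in M_r$ and $F_A$ is not proper then there is $x\in\mathrm{Im}(A^T)$ with $Ax\ne 0$ and $A\bigl((Ax)^3\bigr)=0$; putting $w:=Ax$ we obtain a nonzero $w\in\mathrm{Im}(A)$ with $w^3\in\mathrm{Ker}(A)$. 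So it suffices to prove that the semialgebraic set
\[
\mathcal{B}_r:=\bigl\{A\in M_r:\ \exists\,w\neq 0,\ w\in\mathrm{Im}(A)\ \text{and}\ w^3\in\mathrm{Ker}(A)\bigr\}
\]
has dimension $<r(2m-r)$; then $M_r\setminus\overline{\mathcal{B}_r}$ is open and dense in $M_r$ and, by the contrapositive of Proposition~\ref{MainTheorem}(1), it is contained in $\mathcal{P}$. Note that the clause ``$w\in\mathrm{Im}(A)$'' cannot be dropped: since $w\mapsto w^3$ is a bijection of $\mathbb{R}^m$, every nonzero vector of $\mathrm{Ker}(A)$ is a cube, so without it $\mathcal{B}_r$ would be all of $M_r$ whenever $r<m$.

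To bound $\dim\mathcal{B}_r$ I would pass to the incidence variety
\[
Z:=\bigl\{(A,[w])\in M_r\times\mathbb{P}^{m-1}(\mathbb{R}):\ w\in\mathrm{Im}(A),\ w^3\in\mathrm{Ker}(A)\bigr\},
\]
which is well defined because both conditions are invariant under $w\mapsto\lambda w$, and whose image under the first projection is exactly $\mathcal{B}_r$; thus $\dim\mathcal{B}_r\le\dim Z$. To bound $\dim Z$ I would instead project to the $\mathbb{P}^{m-1}$-factor and estimate a fibre over a fixed $[w]$. The convenient model of $M_r$ here is as the total space of a $\mathrm{GL}_r$-bundle over $\mathrm{Gr}(m-r,m)\times\mathrm{Gr}(r,m)$, a matrix $A$ being encoded by the triple $\bigl(\mathrm{Ker}(A),\mathrm{Im}(A),g\bigr)$ with $g\colon\mathbb{R}^m/\mathrm{Ker}(A)\xrightarrow{\ \sim\ }\mathrm{Im}(A)$ the induced isomorphism (this recovers $\dim M_r=(m-r)r+r(m-r)+r^2=r(2m-r)$). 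In this model the constraint $w\in\mathrm{Im}(A)$ affects only the factor $\mathrm{Gr}(r,m)$, restricting it to $\{V:w\in V\}\cong\mathrm{Gr}(r-1,m-1)$, of codimension $m-r$; the constraint $w^3\in\mathrm{Ker}(A)$, noting $w^3\ne 0$, affects only the factor $\mathrm{Gr}(m-r,m)$, restricting it to $\{W:w^3\in W\}\cong\mathrm{Gr}(m-r-1,m-1)$, of codimension $r$; and the $\mathrm{GL}_r$-fibre is unconstrained. Since the two constraints lie on independent factors, for $1\le r\le m-1$ each fibre of $Z\to\mathbb{P}^{m-1}$ has dimension $r(2m-r)-(m-r)-r=r(2m-r)-m$, so $\dim Z\le(m-1)+\bigl(r(2m-r)-m\bigr)=r(2m-r)-1$, as required. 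The endpoints are immediate: $M_0=\{0\}$ and $F_0=\mathrm{id}$ is proper; for $r=m$ one has $\mathrm{Ker}(A)=0$, forcing $w^3=0$ hence $w=0$, so $\mathcal{B}_m=\emptyset$, consistent with invertible matrices being proper.

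The step I expect to need the most care is the additivity of codimensions in the fibre count, i.e.\ that imposing $w\in\mathrm{Im}(A)$ and $w^3\in\mathrm{Ker}(A)$ lowers the dimension by $m-r$ and by $r$ \emph{independently}. This is exactly why I would insist on the $\bigl(\mathrm{Ker},\mathrm{Im},g\bigr)$-description of $M_r$ and not, say, a raw factorization of $A$ as an $m\times r$ matrix times an $r\times m$ matrix, in which the kernel and the image are entangled and the independence is not visible. A minor point is that everything is happening over $\mathbb{R}$: all the sets above are semialgebraic, images under semialgebraic maps do not raise dimension, and the closure of a semialgebraic set has the same dimension, so $\overline{\mathcal{B}_r}$ is a closed semialgebraic subset of $M_r$ of dimension $<\dim M_r$ and hence nowhere dense, which is what the corollary needs.
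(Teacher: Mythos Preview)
Your argument is correct and follows the same route as the paper: bound the non-proper locus inside $M_r$ by the projection of an incidence variety built from Proposition~\ref{MainTheorem}(1). The paper's proof is a two-sentence sketch that only names the variety $\{(A,x):A\cdot(Ax)^3=0,\ \|Ax\|^2=1\}$ and asserts the conclusion, whereas you substitute $w=Ax$, pass to $\mathbb{P}^{m-1}$, and actually carry out the fibre-dimension count that the paper omits, via the $(\mathrm{Ker},\mathrm{Im},g)$ parametrisation of $M_r$.
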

\begin{proof}
By part 1 of Proposition \ref{MainTheorem}, the set of matrices $A\notin \mathcal{P}$ is contained in a projection of the following  variety $\{(A,x):$ $A.(Ax)^3=0$ and $1-||Ax||^2=0\}$. From this, the corollary follows. 
\end{proof}

The proof of Corollary \ref{CorollaryDensity} can be used to prove that $\mathcal{P}$ is dense in other familiar subvarities of the set of all $m\times m$ matrices. However, see Corollary \ref{Corollary3} for a large class of matrices $A$ for which $F_A$ is non-proper. Now we are ready to state and prove the first main result of this paper. 

\begin{theorem}  Let $A$ be an $m\times m$ matrix with real coefficients and $F_A(x)=x+(Ax)^3: ~\mathbb{R}^m\rightarrow \mathbb{R}^m$.  Then $F_A(x)$ is proper if one of the following conditions is satisfied: 

 (1) $Ker(A)\subset Ker(AA^T)$, where $A^T$ is the transpose of $A$. This condition is satisfied for example when $A$ is invertible, symmetric or anti-symmetric.  
 
 (2) $AA^T$ has rank $1$. 
 
 (3) $A$ is an upper or lower triangular matrix.  
 
 (4) $F_B(x)$ is proper, where $B=DAD^{-3}$ for an invertible {\bf diagonal} matrix $D$.  
 
 (5) There is a vector $\zeta =(\zeta _1,\ldots ,\zeta _m)$ with $\zeta _i>0$ for all $i$ such that $<A(x^3),\zeta * x>\geq 0$ for all $x\in Im(AA^T)$. Then $F_A(x)$ is proper. If moreover, $<A(x^3),\zeta * x> \not= 0$ for all $0\not= x\in Im(AA^T)$, then $F_{-A}(x)$ is also proper. 
 
  (6) $Ker(A)$ is generated by the vector $x_{\infty}=(1,\ldots ,1)$ and $x_{\infty}\notin Im(AA^T)\cap Im(AAA^T)$. 
\label{Theorem1}\end{theorem}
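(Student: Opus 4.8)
\emph{Plan.} All six assertions will be reduced, through the three parts of Proposition~\ref{MainTheorem}, either to a statement about the map $\widehat F_A(x)=x+A(x^3)$ on $Im(AA^T)$ or directly to the necessary condition in part~1. Two linear‑algebra facts are used throughout: $Ker(AA^T)=Ker(A^T)$ (since $AA^Tx=0$ forces $\|A^Tx\|^2=\langle x,AA^Tx\rangle=0$), and, by a rank count, $Im(AA^T)=Im(A)$. Item~(4) is literally part~2 of Proposition~\ref{MainTheorem}, so nothing is to be done there. For item~(1), the hypothesis reads $Ker(A)\subseteq Ker(A^T)$, hence $Ker(A)=Ker(A^T)$ by equality of dimensions, hence $Im(A)=Im(A^T)$; invertible, symmetric and anti‑symmetric matrices are obvious special cases. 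If $F_A$ were not proper, part~1 of Proposition~\ref{MainTheorem} would give $w:=Ax\neq0$ with $w^3\in Ker(A)=Im(A)^\perp$ while $w\in Im(A)$, so $0=\langle w,w^3\rangle=\sum_i w_i^4$, forcing $w=0$ — a contradiction.

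For item~(2), $Im(AA^T)=Im(A)$ is a line $\mathbb{R}v$, and $\widehat F_A(tv)=tv+t^3A(v^3)$ has norm tending to $\infty$ as $|t|\to\infty$ (regardless of whether $A(v^3)=0$), so $\widehat F_A$ is proper on $Im(AA^T)$ and part~3 of Proposition~\ref{MainTheorem} applies. For item~(3), suppose $A$ is lower triangular; then $(\widehat F_A(x))_i=x_i+a_{ii}x_i^3+\sum_{j<i}a_{ij}x_j^3$, and an induction on $i$ — using at each step that $t\mapsto t+a_{ii}t^3$ is a proper map $\mathbb{R}\to\mathbb{R}$ — shows that if $\widehat F_A(x_n)$ is bounded then every coordinate of $x_n$ is bounded; thus $\widehat F_A$ is proper on all of $\mathbb{R}^m$, in particular on $Im(AA^T)$, and part~3 finishes. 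The upper‑triangular case is the same with the induction run from $i=m$ downward (equivalently, conjugate by the order‑reversing permutation, which preserves properness and interchanges the two cases).

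For item~(5), by part~3 it suffices to show $\widehat F_A$ is proper on $Im(AA^T)$. If not, take $u_n\in Im(AA^T)$ with $\|u_n\|\to\infty$ and $u_n+A(u_n^3)=O(1)$; pairing with $\zeta*u_n$ gives $\langle u_n+A(u_n^3),\zeta*u_n\rangle\ge\langle u_n,\zeta*u_n\rangle=\sum_i\zeta_i u_{n,i}^2\ge(\min_i\zeta_i)\|u_n\|^2$, whereas the left‑hand side is $\le\|u_n+A(u_n^3)\|\,\|\zeta*u_n\|=O(\|u_n\|)$, a contradiction. For the ``moreover'' part, the strict hypothesis together with continuity and the degree‑$4$ homogeneity of $h(x):=\langle A(x^3),\zeta*x\rangle$ (minimise $h$ over the unit sphere of $Im(AA^T)$) upgrades the inequality to $h(x)\ge c\|x\|^4$ on $Im(AA^T)$ with $c>0$; applying part~3 to $-A$ (note $Im((-A)(-A)^T)=Im(AA^T)$), if $u_n-A(u_n^3)=O(1)$ with $\|u_n\|\to\infty$ then $\langle u_n-A(u_n^3),\zeta*u_n\rangle\le(\max_i\zeta_i)\|u_n\|^2-c\|u_n\|^4\to-\infty$, which is incompatible with $|\langle u_n-A(u_n^3),\zeta*u_n\rangle|\le\|u_n-A(u_n^3)\|\,\|\zeta*u_n\|=O(\|u_n\|)$.

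Item~(6) is the substantial one and I expect it to be the main obstacle. Here $Ker(A)=\mathbb{R}x_\infty$ with $x_\infty=(1,\ldots,1)$, and two rigidity facts drive everything: $x_\infty^k=x_\infty$ for all $k\ge1$, and a real number has a unique real cube root. I would argue from part~1 of Proposition~\ref{MainTheorem}: if $F_A$ is not proper, its proof produces $y_n\in Im(A^T)$ with $t_n:=\|y_n\|\to\infty$, $p_n:=y_n/t_n\to y_\infty$ a unit vector, $A((Ay_\infty)^3)=0$, and moreover (this is exactly the displayed limit in that proof) $t_n^2A((Ap_n)^3)\to-A(y_\infty)$. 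From $(Ay_\infty)^3\in Ker(A)=\mathbb{R}x_\infty$ and cube‑root uniqueness, $Ay_\infty=\lambda x_\infty$ with $\lambda\neq0$ (nonzero because $0\neq y_\infty\in Im(A^T)$), so in particular $x_\infty\in Im(A)=Im(AA^T)$. Write $Ap_n=\lambda x_\infty+e_n$ with $e_n\to0$, $e_n\in Im(A)$, and then $e_n=c_nx_\infty+f_n$ with $f_n\perp x_\infty$, so $f_n\in Im(A)\cap Im(A^T)$, $c_n\to0$, $\|f_n\|\to0$; since nonzero vectors of $Im(A^T)$ are not killed by $A$, $Af_n\neq0$ whenever $f_n\neq0$. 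Using $x_\infty^k=x_\infty$, expand $(Ap_n)^3=((\lambda+c_n)x_\infty+f_n)^3$ coordinatewise and apply $A$ (which annihilates the $x_\infty$‑term) to obtain
\[
t_n^2\bigl[\,3(\lambda+c_n)^2Af_n+3(\lambda+c_n)A(f_n^2)+A(f_n^3)\,\bigr]\longrightarrow-\lambda x_\infty .
\]
Pass to a subsequence with $t_n^2\|f_n\|\to\sigma\in[0,\infty]$ and $f_n/\|f_n\|\to\bar f$ (a unit vector of $Im(A)\cap Im(A^T)$, so $A\bar f\neq0$). If $\sigma=0$ the left side tends to $0\neq-\lambda x_\infty$; if $\sigma=\infty$ the norm of the leading term $3(\lambda+c_n)^2t_n^2Af_n$ tends to $\infty$ while the remaining terms are $o(t_n^2\|f_n\|)$, so the left side is unbounded — both impossible. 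Hence $\sigma\in(0,\infty)$, the left side tends to $3\lambda^2\sigma A(\bar f)$, and therefore $A(\bar f)$ is a nonzero multiple of $x_\infty$; as $\bar f\in Im(A)$ this yields $x_\infty\in A(Im(A))=Im(AAA^T)$, which together with $x_\infty\in Im(AA^T)$ contradicts the hypothesis of~(6). The delicate point is precisely this bookkeeping of the two competing scales $t_n$ and $\|f_n\|$: showing that only the regime $t_n^2\|f_n\|\in(0,\infty)$ survives, and then reading the surviving identity $A(\bar f)\in\mathbb{R}x_\infty\setminus\{0\}$ against the hypothesis $x_\infty\notin Im(AA^T)\cap Im(AAA^T)$.
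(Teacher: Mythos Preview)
Your proof is correct. Items (1)--(5) follow the paper's argument essentially verbatim; the only cosmetic differences are that in (1) you phrase the orthogonality as $w^{3}\in Ker(A)=Im(A)^{\perp}$ rather than writing $x=A^{T}u$ and moving $AA^{T}$ across the pairing, and in (3) you run an induction on coordinates of $\widehat F_{A}$ rather than the paper's ``largest unbounded index'' argument applied directly to $F_{A}$. These are the same ideas in slightly different packaging.

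Item (6) is where you genuinely diverge from the paper. The paper does not prove (6) inside Theorem~\ref{Theorem1} at all: it defers to Corollary~\ref{Corollary4}, which in turn rests on the corank-$1$ characterisation of Theorem~\ref{Theorem2Bis} developed in the next subsection. That route goes through the full machinery of writing $x_{n}^{3}=\gamma_{n}^{3}x_{\infty}^{3}+u_{n}$, taking cube roots via Taylor expansion, and tracking successive scales $\gamma_{n},\gamma_{n}^{1/3},\ldots$ to produce the algebraic conditions $x_{\infty}\in Im(AA^{T})$ and $x_{\infty}\in A(Im(AA^{T}))=Im(AAA^{T})$. Your argument is instead a direct two-scale analysis on the \emph{other} side of the correspondence in Proposition~\ref{MainTheorem}(3): you stay with $F_{A}$ and the limit $t_{n}^{2}A((Ap_{n})^{3})\to -Ay_{\infty}$ already extracted in the proof of Proposition~\ref{MainTheorem}(1), expand $(Ap_{n})^{3}$ using $x_{\infty}^{k}=x_{\infty}$, and run a trichotomy on $t_{n}^{2}\|f_{n}\|$. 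This is more economical for the specific statement of (6) and avoids forward references; the paper's approach, on the other hand, is what generalises to the full necessary-and-sufficient characterisation in the corank-$1$ case and motivates Conjecture~C. Both arguments ultimately hinge on the same two facts---uniqueness of real cube roots and the identity $x_{\infty}*v=v$---and arrive at the same pair of obstructions $x_{\infty}\in Im(AA^{T})$, $x_{\infty}\in Im(AAA^{T})$.
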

\begin{proof}
 
 1) Assume that $Ker(A)\subset Ker(AA^T)$. We will show that $F_A(x)$ is proper. Assume otherwise, then by 1) of Proposition \ref{MainTheorem}, there is $x\in A^T$ such that $A.(Ax)^3=0$ but $Ax\not=0$. We write $x=A^Tu$. Then on the one hand
 \begin{eqnarray*}
<(Ax)^3,Ax>=||(Ax)^2||^2>0,
\end{eqnarray*} 
since $Ax\not= 0$. On the other hand
\begin{eqnarray*}
<(Ax)^3,Ax>=<(Ax)^3,AA^Tu>=<AA^T.(Ax)^3,u>=0,
\end{eqnarray*}
since $(Ax)^3\in Ker(A)$ and by assumption we have $Ker(A)\subset Ker(AA^T)$. Therefore, we obtain a contradiction. Thus $F_A(x)$ is proper. 

2) Assume that $AA^T$ has rank $1$, we will show that $F_A(x)$ is proper. By part 3 of Proposition \ref{MainTheorem}, $F_A(x)$ is proper iff $x+Ax^3$ is proper on $Im(AA^T)$. By assumption, $Im(AA^T)$ has dimension $1$, and hence the map $x+Ax^3$ is obviously proper.

3) We assume that $A$ is an upper triangular matrix, and will show that $F_A$ is proper. Assume otherwise, there is a sequence $x_n$ so that $||x_n||\rightarrow \infty$ and $x_n+(Ax_n)^3$ is bounded. We write $x_{n}=(x_{n,1},\ldots ,x_{n,m})$, and assume that $1\leq j\leq m$ is the largest number so that $\limsup _{n\rightarrow\infty}|x_{n,j}|=\infty$. The $j$-th coordinate of $x_n+(Ax_n)^3$, by the assumption that $A$ is an upper triangular matrix, is: $x_{n,j}+(a_{j,j}x_{n,j}+\sum _{k=j+1}^ma_{j,k}x_{n,k})^3$. By assumption, we have that $\sum _{k=j+1}^ma_{j,k}x_{n,k}$ is bounded, $\limsup _{n\rightarrow\infty}||x_{n,j}||=\infty$ and  $x_{n,j}+(a_{j,j}x_{n,j}+\sum _{k=j+1}^ma_{j,k}x_{n,k})^3$ is bounded. This is a contradiction. 

 4) This is exactly part 2) of Proposition \ref{MainTheorem}. 
 
 5) If $F_A$ is non-proper, then by Proposition \ref{MainTheorem} there is a sequence $x_n\in Im(AA^T)$ for which $||x_n||\rightarrow\infty$ and $x_n+A(x_n^3)$ is bounded. Then $x_n*x_n+A(x_n^3)*x_n$ is $O(x_n)$. On the other hand, by the assumption on $\zeta$:
 \begin{eqnarray*}
 <x_n*x_n+A(x_n^3)*x_n,\zeta >\geq \zeta ^*||x_n||^2,
 \end{eqnarray*}
 where $\zeta ^*=\min _{i=1,\ldots ,m}\zeta _i>0$. Thus we obtain a contradiction. 
 
 In the case $<A(x^3)*x,\zeta > \not= 0$ for all $0\not= x\in Im(AA^T)$, we have $<A(x^3)*x,\zeta >\geq c||x^2||^2$ for some constant $c>0$ and for all $x\in Im(AA^T)$. Hence, in this case, also $x-(Ax)^3$ is proper.  
 
 6) This is a special case of Corollary \ref{Corollary4} in the next subsection, where a sufficient and necessary condition is given. 
 \end{proof}

\begin{remarks}We note that if $A$ is  invertible or symmetric or anti-symmetric, then $Ker(A)\subset Ker(AA^T)$ and hence part 1) of Theorem \ref{Theorem1} can be applied. We note that if $A$ has rank $1$, then $AA^T$ has rank $1$, and hence part 2) of Theorem \ref{Theorem1} can be used.

We note that Theorem \ref{Theorem1} can be extended to the case where $A$ has coefficients in $\mathbb{C}$ and $F_A(x):~\mathbb{C}^m\rightarrow \mathbb{C}^m$, by using appropriate modifications. For example, we replace the transpose by conjugate transpose. Another way is to reduce properness of a map on $\mathbb{C}^m$ to that of a map on $\mathbb{R}^{2m}$. 
\end{remarks}

\subsection{Non-properness of identity plus cubics} In this section, we provide an (almost) necessary and sufficient {\bf linear algebraic condition} for $x+A(x^3)$ to be non-proper, thereby deriving a sufficient linear algebraic condition for $F_A(x)$ to be non-proper. It is interesting that this is possible because of the simple fact that any real number has a unique real third-root. As a byproduct, we provide a large class of matrices $A$ for which $F_A(x)$ is non-proper. We start with an explicit example of such a matrix $A$. 

\begin{remarks}This example is communicated to us by Jiang Liu. There are matrices $A$ for which $F_A$ is non-proper. Indeed, let $f$ be the example by Pinchuk mentioned above, which is a polynomial mapping on $\mathbb{R}^2$ satisfying the following conditions: (i) $Jf$ is invertible everywhere, and (ii) $f$ is not injective. By \cite{druzkowski}, there is a map $F_A(x)=x+(Ax)^3$, in a big dimension $m$, with the same properties (i) and (ii). By Hadamard's theorem, it follows that $F_A(x)$ is not proper. Indeed, if $z$ is the coordinate of $\mathbb{R}^2$, then there is a new variable $y$ (in $m-2$ dimension for some large $m$) so that there are automorphisms of $C,D$ of $\mathbb{R}^{m}$ for which $C\circ (f(z),y)\circ D=F_A(x)$, for $x=(y,z)$. 

We note that the dimension of this example is quite big, and the example is itself quite complicated. By using results presented next, we will construct more explicit examples in the smallest possible dimension $3$. 
\end{remarks}

By Proposition \ref{MainTheorem}, that $F_A(x)=x+(Ax)^3$ is non-proper is equivalent to that there is an unbounded sequence $x_n\in Im(AA^T)$ so that $x_n+A(x_n^3)$ is bounded. We assume that $x_n/||x_n||$ converges to $x_{\infty}$. By Proposition \ref{MainTheorem}, we know that $A.(x_{\infty}^3)=0$, but the above Remark shows that this is not sufficient.  The next result gives a simple necessary and sufficient condition, under the (generic) assumption that all coordinates of $x_{\infty}$  are non-zero, plus an additional technical condition which we feel is not necessarily needed.

{\bf Definition 2.} Let $B\subset \mathbb{R}^m$ be a subset. We denote $B^{1/3}=\{x\in \mathbb{R}^3:~x^3\in B\}$. We note that the map $B^{1/3}\mapsto B$ given by $x\mapsto x^3$ is bijective.

\begin{theorem}
Let $A$ be an $m\times m$ matrix with real coefficients and $\widehat{F}_A(x)=x+A(x^3):~\mathbb{R}^m\rightarrow \mathbb{R}^m$. Let $V\subset \mathbb{R}^m$ be a subvector space containing $Ker(A)^{1/3}$. Let $x_{\infty}\in V$ be a vector whose every coordinates are non-zero. Then the following two conditions are equivalent: 

1)  There is a sequence $x_n\in V$ so that $||x_n||\rightarrow \infty$,  $\widehat{F}_A(x_n)$ is bounded, and $x_n/||x_n||\rightarrow x_{\infty}$. 

2) $x_{\infty}\in A(V*x_{\infty}^{2})\cap Ker(A)^{1/3}$. 

\label{Theorem2}\end{theorem}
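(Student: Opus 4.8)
## Proof proposal

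The plan is to prove the two implications separately, using the unique real cube root to move back and forth between a statement about the sequence $x_n$ and a statement about the sequence $x_n^3$, which is where the linearity of $A$ becomes usable.

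First I would prove $1)\Rightarrow 2)$. Suppose $x_n\in V$ with $\|x_n\|\to\infty$, $\widehat F_A(x_n)=x_n+A(x_n^3)$ bounded, and $x_n/\|x_n\|\to x_\infty$. Applying $A$ and dividing by $\|x_n\|^3$, exactly as in part 1) of Proposition \ref{MainTheorem}, gives $A(x_\infty^3)=0$, i.e.\ $x_\infty\in Ker(A)^{1/3}$. The new content is the membership $x_\infty\in A(V*x_\infty^2)$. For this I would expand $x_n^3$ around the ``direction'' $x_\infty$: write $x_n=\|x_n\|\,(x_\infty+\varepsilon_n)$ with $\varepsilon_n\to 0$, so that
\begin{eqnarray*}
x_n^3=\|x_n\|^3\big(x_\infty^3+3\,x_\infty^2*\varepsilon_n+3\,x_\infty*\varepsilon_n^2+\varepsilon_n^3\big).
\end{eqnarray*}
Then $A(x_n^3)=\|x_n\|^3 A(x_\infty^3)+3\|x_n\|^3 A(x_\infty^2*\varepsilon_n)+O(\|x_n\|^3\|\varepsilon_n\|^2)$, and since $A(x_\infty^3)=0$ and $x_n+A(x_n^3)$ is bounded while $x_n=O(\|x_n\|)$, we get $\|x_n\|^3 A(x_\infty^2*\varepsilon_n)=O(\|x_n\|+\|x_n\|^3\|\varepsilon_n\|^2)$. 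Setting $w_n:=\|x_n\|^2\,\varepsilon_n/\|\varepsilon_n\|$ (or more carefully rescaling $\|x_n\|^3\varepsilon_n$ by its own norm) and passing to a subsequential limit of $\varepsilon_n/\|\varepsilon_n\|$, I expect to extract a vector $\eta\in V$ with $x_\infty=-3A(x_\infty^2*\eta)=A(x_\infty^2*(-3\eta))\in A(V*x_\infty^2)$; one must only check the rescaling is consistent (the $\|x_n\|^3\|\varepsilon_n\|^2$ error term does not dominate), which uses $\|x_n+A(x_n^3)\|=O(1)$ to control $\|x_n\|^3\|\varepsilon_n\|$. This bookkeeping is the main obstacle and will require care about which quantity tends to infinity at which rate; the cleanest route is probably to argue directly that $\|x_n\|^3 A(x_\infty^2*\varepsilon_n)+x_n$ is bounded and that $x_n/\|x_n\|\to x_\infty\neq 0$ forces $\|x_n\|^3\|\varepsilon_n\|$ to be bounded below away from $0$ along a subsequence, so that after normalising we land on $x_\infty$ exactly.

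For $2)\Rightarrow 1)$ the construction is explicit and this is the easier direction. Assume $x_\infty\in Ker(A)^{1/3}$ and $x_\infty=A(v*x_\infty^2)$ for some $v\in V$. For $t>0$ large set
\begin{eqnarray*}
x_t:=t\Big(x_\infty-\frac{1}{3t^2}\,v\Big)\in V,
\end{eqnarray*}
using that $V$ is a subspace and $x_\infty,v\in V$. Then $x_t/\|x_t\|\to x_\infty/\|x_\infty\|$ (and after rescaling $x_\infty$ we may assume $\|x_\infty\|=1$, or just note the limiting direction is $x_\infty$ up to positive scalar, which is all that is needed — or choose the normalisation of $v$ to make $x_\infty$ itself the limit). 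Cubing coordinatewise,
\begin{eqnarray*}
x_t^3=t^3\Big(x_\infty^3-\frac{1}{t^2}\,x_\infty^2*v+\frac{1}{3t^4}\,x_\infty*v^2-\frac{1}{27t^6}\,v^3\Big),
\end{eqnarray*}
so $A(x_t^3)=t^3 A(x_\infty^3)-t\,A(x_\infty^2*v)+O(1/t)=0-t\,x_\infty+O(1/t)$. Hence $x_t+A(x_t^3)=t x_\infty - \frac{1}{3t}v - t x_\infty + O(1/t)=O(1/t)$, which is bounded, while $\|x_t\|\to\infty$. Taking $x_n:=x_{t_n}$ for any $t_n\to\infty$ gives the desired sequence, after replacing $x_\infty$ by a positive multiple so that $x_n/\|x_n\|$ converges to precisely $x_\infty$; since hypothesis 2) is scale-invariant in $x_\infty$ up to adjusting $v$, there is no loss. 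Finally I would remark that the hypothesis $Ker(A)^{1/3}\subset V$ is what guarantees the limiting direction $x_\infty$ produced in direction $1)\Rightarrow 2)$ actually lies in the ambient space $V$ we are working in, and that the ``all coordinates non-zero'' hypothesis is used precisely to make the map $x\mapsto x^3$ a local diffeomorphism near $x_\infty$, so that the first-order expansion above is legitimate and invertible in the relevant directions.
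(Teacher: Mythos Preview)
Your construction for $2)\Rightarrow 1)$ is essentially the paper's: with $u\in V*x_\infty^2$ satisfying $Au+x_\infty=0$, the paper takes $x_n=\gamma_n x_\infty+\frac{1}{3\gamma_n}u*x_\infty^{-2}$, which is exactly your $x_t$ after the substitution $u=-v*x_\infty^2$. That direction is fine.

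The direction $1)\Rightarrow 2)$ has a genuine gap. Your expansion $x_n=\|x_n\|(x_\infty+\varepsilon_n)$ gives
\[
\|x_n\|x_\infty+3\|x_n\|^3A(x_\infty^2*\varepsilon_n)+O(\|x_n\|^3\|\varepsilon_n\|^2)=O(1),
\]
and the whole argument hinges on showing the quadratic error is negligible, equivalently that $\|x_n\|^2\varepsilon_n$ stays bounded. You do not establish this, and there is no direct way to do so from $\|x_n+A(x_n^3)\|=O(1)$ alone: if $x_\infty^2*\hat\eta\in Ker(A)$ for some unit vector $\hat\eta\in V$, then a component of $\varepsilon_n$ along $\hat\eta$ is invisible to $A$ at first order, so $\|x_n\|^2\|\varepsilon_n\|$ can blow up while the displayed identity still holds. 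Your suggested ``cleanest route'' (arguing $\|x_n\|^3\|\varepsilon_n\|$ is bounded \emph{below}) goes in the wrong direction; what is needed is an upper bound, and that is exactly what is not available from a naive expansion around $\|x_n\|x_\infty$.

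The paper avoids this by expanding around a different base point. It writes $x_n^3=w_n+u_n$ with $w_n\in Ker(A)$ chosen so that $\|u_n\|\sim\|A(x_n^3)\|\sim\|x_n\|$, sets $z_n:=w_n^{1/3}$, and observes that the hypothesis $Ker(A)^{1/3}\subset V$ forces $z_n\in V$. Because all coordinates of $x_\infty$ are non-zero one has $z_n/\|x_n\|\to x_\infty$, and the cube-root Taylor expansion $x_n=z_n(1+u_n*z_n^{-3})^{1/3}=z_n+\frac{1}{3}u_n*z_n^{-2}+O(\|x_n\|^{-3})$ is now legitimate since $u_n*z_n^{-3}=O(\|x_n\|^{-2})$. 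From $x_n-z_n\in V$ one reads off $u*x_\infty^{-2}\in V$, i.e.\ $u\in V*x_\infty^2$, and then $Au+x_\infty=0$ follows from $Au_n=A(x_n^3)$. The point is that subtracting the \emph{exact} kernel part of $x_n^3$ (not just its leading term $\|x_n\|^3x_\infty^3$) automatically gives a remainder of size $O(\|x_n\|)$, which is precisely the control your expansion lacks. This is also where both hypotheses are genuinely used: $Ker(A)^{1/3}\subset V$ to place $z_n$ in $V$ (not, as you wrote, to place $x_\infty$ in $V$, which is already assumed), and the non-vanishing of the coordinates of $x_\infty$ to make $z_n^{-2}$ and $x_\infty^{-2}$ meaningful.
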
 
\begin{proof}

Proof of 1) $\Rightarrow $ 2): Assume that $\widehat{F}_A(x_n)$ is bounded. Then by the arguments in the proof of Proposition \ref{MainTheorem}, we have that $x_{\infty}\in Ker(A)$. Since $x_n +A(x_n^3)$ is bounded, there is $u_n\in \mathbb{R}^m$ so that $Au_n=A(x_n^3)$ and $||u_n||\sim ||A(x_n^3)|| \sim ||x_n||$. Hence, we can write
\begin{eqnarray*}
x_n^3=w_n+u_n,
\end{eqnarray*}
where $w_n\in Ker(A)$. Since $Ker(A)^{1/3}\subset V$, we can find $z_n\in V$ so that $z_n^3=w_n$. By assumptions on $x_{\infty}, x_n,w_n$ and $u_n$, it follows that for large enough $n$, all coordinates of $z_n$ are also non-zero. Moreover, 
\begin{eqnarray*}
\lim _{n\rightarrow\infty}||x_n||/||z_n||&=&1,\\
\lim _{n\rightarrow\infty}x_n/||x_n|| &=& \lim _{n\rightarrow\infty}z_n/||x_n|| =x_{\infty}. 
\end{eqnarray*}

Since each real number has a unique real third root, we obtain by Taylor's expansion and with $\gamma _n=||x_n||$
\begin{eqnarray*}
x_n=z_n(1+u_n*z_n^{-3})^{1/3}=z_n+\frac{1}{3}u_n*z_n^{-2}+O(1/\gamma _n^3). 
\end{eqnarray*}

We can assume that $u_n/\gamma _n\rightarrow u\in \mathbb{R}^m$.  Then 
\begin{eqnarray*}
\lim _{n\rightarrow\infty}\gamma _n(x_n-z_n)=\lim _{n\rightarrow\infty}\gamma _n\frac{1}{3}u_n*z_n^{-2}=\frac{1}{3}u*x_{\infty}^{-2}.
\end{eqnarray*}

Because for every $n$, the term $\gamma _n(x_n-z_n)$ is in $V$, it follows that the limit is also in $V$. Therefore $u*x_{\infty}^{-2}\in V$, or equivalently $u\in V*x_{\infty}^2$.  
 
Now, since $Au_n=A(x_n^3)$ for all $n$ and $x_n+A(x_n^3)$ is bounded, it follows that
\begin{eqnarray*}
0=\lim _{n\rightarrow\infty}\frac{1}{\gamma _n} [x_n+Au_n]=x_{\infty}+Au.
\end{eqnarray*}
This implies that $x_{\infty}=A(-u)\in A(V*x_{\infty}^2)$. 

Proof of 2) $\Rightarrow$ 1): Assume that $x_{\infty}\in Ker(A)\cap A(V*x_{\infty}^2)$. Then, there is $u\in V*x_{\infty}^2$ such that $Au+x_{\infty}=0$. We choose a sequence $\gamma _n\rightarrow\infty$, and define 
\begin{eqnarray*}
x_n=\gamma _nx_{\infty}+\frac{1}{3\gamma _n}u*x_{\infty}^{-2}.
\end{eqnarray*}
Since $x_{\infty}\in V$ and $u*x_{\infty}^{-2}\in (V*x_{\infty}^2)*x_{\infty}^{-2}=V$, we have that $x_n\in V$ for all $n$. We have 
\begin{eqnarray*}
x_n^3=\gamma _n^3x_{\infty}^3+\gamma _nu+O(1/\gamma _n). 
\end{eqnarray*}
Because $x_{\infty}\in Ker(A)^{1/3}$, we have $x_{\infty}^3\in Ker (A)$. Therefore, using $x_{\infty}+Au=0$, we obtain
\begin{eqnarray*}
x_n+A(x_n^3)&=&[\gamma _nx_{\infty}+\frac{1}{3\gamma _n}u*x_{\infty}^{-2}]+A(\gamma _n^3x_{\infty}^3+\gamma _nu+O(1/\gamma _n))\\
&=&\gamma _nx_{\infty}+A(\gamma _nu) + O(1/\gamma _n)\\
&=&O(1/\gamma _n). 
\end{eqnarray*} 
Therefore, $x_n+A(x_n^3)$ is bounded. Moreover, $||x_n||\rightarrow\infty$ and $x_n/||x_n||\rightarrow x_{\infty}$.  
\end{proof}

Even if $x_{\infty}$ can have some zero-coordinates or $Ker(A)^{1/3}\not\subset V$, the proof Theorem \ref{Theorem2} can be adapted to say something more. We leave it to the readers for carrying out the suitable modifications. Here we illustrate this idea for the special case where $A$ has corank $1$ where a complete characterisation can be obtained. See also Theorem \ref{TheoremNecessarySufficient} at the end of this subsection for another illustration, where we show that the necessary conditions and sufficient conditions for a map $F_A(x)$ is proper are not too much far apart. (For simplicity of the proof we consider here $x_{\infty}=(1,\ldots, 1,0,\ldots ,0)$, but it is easy to modify the proof to the case where $x_{\infty}$ is an arbitrary vector whose first $k$ coordinates are non-zero and the last $m-k$ coordinates are $0$, for example by transforming  $A$ into $DAD^{-3}$ for some appropriate diagonal matrix $D$.)  

\begin{theorem} Let $A$ be an $m\times m$ matrix of  corank $1$.  Let $x_{\infty}$ be a vector whose first $k$ coordinates are non-zero and whose last $m-k$ coordinates are $0$, where $k\geq 1$.  Let $V\subset \mathbb{R}^m$ be a vector subspace containing $x_{\infty}$, and $pr:\mathbb{R}^m\rightarrow \mathbb{R}^k$ is the usual projection to the first $k$ coordinates. Denote also by $\widehat{x}=x-pr(x)$. 

Then the following 2 statements are equivalent. 

1) There exists a sequence $x_n\in V$ with $||x_n||\rightarrow \infty$, $x_n/||x_n||\rightarrow x_{\infty}$ and $x_n+A(x_n^3)$ is bounded.

2) Some algebraic conditions must be satisfied. The first condition is that there is $u\in \mathbb{R}^m$ such that $Au+x_{\infty}=0$. 

In case all coordinates of $\widehat{u}$, considered as an element of the vector space $Ker(pr)=\mathbb{R}^{m-k}$, are non-zero,  then these algebraic conditions are: There exist $u,v\in \mathbb{R}^m$ such that

\begin{eqnarray*}
x_{\infty}^3&\in&Ker(A),\\ 
Au+x_{\infty}&=&0,\\
Av+\widehat{u}^{1/3}&=&0,\\
\widehat{u}^{1/3}, \widehat{v}*\widehat{u}^{-2/3}&\in& V,\\
pr(u),pr(v)&\in& pr(V).
\end{eqnarray*}

In the general case, we have extra conditions which can be defined inductively, which are too complicated to explain here, see the proof of the theorem for more detail. 

\label{Theorem2Bis}\end{theorem}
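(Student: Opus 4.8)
The plan is to run the cube‑root substitution from the proof of Theorem~\ref{Theorem2}, but now splitting the coordinates into the first $k$ (where $x_{\infty}$ is non‑degenerate) and the last $m-k$ (where $x_{\infty}$ vanishes), treating the two blocks by different means and iterating the procedure on the vanishing block. First I would normalise: by the substitution $A\mapsto DAD^{-3}$, $x\mapsto D^{3}x$ of part~2) of Proposition~\ref{MainTheorem}, taking $d_{i}$ to be the real cube root of the reciprocal of the $i$‑th coordinate of $x_{\infty}$ for $i\le k$ and $d_{i}=1$ otherwise, one may assume $x_{\infty}=(1,\ldots,1,0,\ldots,0)$, so that $x_{\infty}^{3}=x_{\infty}$. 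One may also assume $x_{\infty}^{3}\in Ker(A)$: otherwise statement~1) fails by the argument of Proposition~\ref{MainTheorem}, and statement~2) fails too (it lists $x_{\infty}^{3}\in Ker(A)$ as a condition), so the equivalence is trivial. Since $A$ has corank~$1$ and $x_{\infty}^{3}\neq 0$, this forces $Ker(A)=\R\,x_{\infty}^{3}$, hence $Ker(A)^{1/3}=\R\,x_{\infty}\subset V$; thus the hypothesis $Ker(A)^{1/3}\subset V$ of Theorem~\ref{Theorem2} is automatic, and the only new feature is the vanishing coordinates of $x_{\infty}$.

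For $1)\Rightarrow 2)$ I would put $b_{n}=x_{n}+A(x_{n}^{3})$ (bounded), choose the scale $\gamma_{n}=\tfrac1k\langle pr(x_{n}),(1,\ldots,1)\rangle\to\infty$ so that $x_{n}=\gamma_{n}x_{\infty}+o(\gamma_{n})$ and $\delta_{n}:=pr(x_{n})-\gamma_{n}pr(x_{\infty})$ is orthogonal to $(1,\ldots,1)$, fix a complement $W$ of $Ker(A)$, and write $x_{n}^{3}=s_{n}x_{\infty}^{3}+u_{n}$ with $u_{n}\in W$; then $Au_{n}=A(x_{n}^{3})=b_{n}-x_{n}$ gives $u_{n}/\gamma_{n}\to u$ with $Au+x_{\infty}=0$ (and $x_{\infty}\in Im(A)$, $Im(A)$ being closed). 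On the first block, where every coordinate of $pr(x_{n})$ is comparable to $\gamma_{n}$, I would let $z_{n}\in V$ be the cube root of the $Ker(A)$‑component of $x_{n}^{3}$ and Taylor‑expand as in Theorem~\ref{Theorem2}, getting $pr(x_{n})=pr(z_{n})+\tfrac13 pr(u_{n})*pr(z_{n})^{-2}+O(\gamma_{n}^{-3})$ and, after a short bootstrap, $\delta_{n}=\tfrac1{3\gamma_{n}}pr(u)+O(\gamma_{n}^{-5/3})$. On the last block $\widehat{z_{n}}=0$, so $\widehat{x_{n}}=\widehat{u_{n}}^{\,1/3}$ \emph{exactly}; if $\widehat{u}$ has no zero coordinate this yields $\widehat{x_{n}}/\gamma_{n}^{1/3}\to\widehat{u}^{\,1/3}$. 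Combining, $(x_{n}-z_{n})/\gamma_{n}^{1/3}\to\widehat{u}^{\,1/3}$, which lies in $V$ since $x_{n}-z_{n}\in V$; and $\gamma_{n}\,pr(x_{n}-z_{n})\to\tfrac13 pr(u)$ lies in the closed subspace $pr(V)$. I would then iterate once: with $\widetilde{u}_{n}=u_{n}-\gamma_{n}u$ one has $A\widetilde{u}_{n}=b_{n}-(x_{n}-\gamma_{n}x_{\infty})$ and $(x_{n}-\gamma_{n}x_{\infty})/\gamma_{n}^{1/3}\to\widehat{u}^{\,1/3}$, so $\widetilde{u}_{n}/\gamma_{n}^{1/3}\to v$ with $Av+\widehat{u}^{\,1/3}=0$; expanding $\widehat{x_{n}}=(\gamma_{n}\widehat{u}+\widehat{\widetilde{u}_{n}})^{1/3}$ one order further (licit because $\widehat{u}$ has no zero coordinate) produces the term $\tfrac13\gamma_{n}^{-1/3}\widehat{v}*\widehat{u}^{\,-2/3}$, and the same ``stays in $V$ / projects into $pr(V)$'' bookkeeping one level deeper then gives $\widehat{v}*\widehat{u}^{\,-2/3}\in V$ and $pr(v)\in pr(V)$. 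With $\widehat{u}$ non‑degenerate the residual is now bounded, the process halts, and one has obtained exactly the five conditions of~2). When $\widehat{u}$ (or a profile produced later) has zero coordinates, I would repeat the device on that vanishing sub‑block, where $\widehat{x_{n}}=\widehat{u_{n}}^{\,1/3}$ again forces a new profile of strictly smaller support living at a smaller scale ($\gamma_{n}^{1/9}$, then $\gamma_{n}^{1/27}$, \ldots), each contributing one more equation of the form $A(\,\cdot\,)+(\text{profile})=0$ together with one $V$‑membership and one $pr(V)$‑membership condition; as the successive supports are pairwise disjoint subsets of $\{k+1,\ldots,m\}$, the recursion stops after at most $m-k$ further steps, and this is the list of ``extra conditions defined inductively''.

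For $2)\Rightarrow 1)$ I would reverse the construction. Using the $pr(V)$‑conditions, pick $w^{(1)},w^{(2)}\in V$ with $pr(w^{(1)})=pr(u)$ and $pr(w^{(2)})=pr(v)$, choose any $\gamma_{n}\to\infty$, and set
\[
x_{n}=\gamma_{n}x_{\infty}+\gamma_{n}^{1/3}\widehat{u}^{\,1/3}+\tfrac13\gamma_{n}^{-1/3}\,\widehat{v}*\widehat{u}^{\,-2/3}+\tfrac13\gamma_{n}^{-1}w^{(1)}+\tfrac13\gamma_{n}^{-5/3}w^{(2)},
\]
which lies in $V$ because each of $x_{\infty},\widehat{u}^{\,1/3},\widehat{v}*\widehat{u}^{\,-2/3},w^{(1)},w^{(2)}$ does; moreover $\Vert x_{n}\Vert\to\infty$ and $x_{n}=\gamma_{n}x_{\infty}+o(\gamma_{n})$. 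Expanding $x_{n}^{3}$ block by block one finds $x_{n}^{3}=\gamma_{n}^{3}x_{\infty}^{3}+\gamma_{n}u+\gamma_{n}^{1/3}v+O(\gamma_{n}^{-1/3})$, so using $A(x_{\infty}^{3})=0$, $Au=-x_{\infty}$, $Av=-\widehat{u}^{\,1/3}$ one gets
\[
x_{n}+A(x_{n}^{3})=x_{n}-\gamma_{n}x_{\infty}-\gamma_{n}^{1/3}\widehat{u}^{\,1/3}+O(\gamma_{n}^{-1/3})=O(\gamma_{n}^{-1/3}),
\]
which is bounded. In the degenerate case one appends the further terms prescribed by the inductive conditions and argues identically.

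The hard part will be the asymptotic bookkeeping: fixing the correct normalisation of $\gamma_{n}$, passing to subsequences so that all the ratios $u_{n}/\gamma_{n}$, $\widetilde{u}_{n}/\gamma_{n}^{1/3}$, \ldots\ converge, and checking that every ``lower‑order'' term really is of lower order --- this already requires the bootstrap that $\delta_{n}=O(1/\gamma_{n})$, not merely $o(\gamma_{n})$. In the general (degenerate) case there is the extra work of proving that the successive profiles have strictly decreasing supports (so that the recursion terminates) and of recording the resulting conditions explicitly, which is what the statement defers to the proof. A subtler point is that $u$ and $v$ are determined only modulo $Ker(A)=\R\,x_{\infty}^{3}$: adding a multiple of $x_{\infty}^{3}$ changes $pr(u)$ but not $\widehat{u}$, and one must check that the conditions in~2) are consistent with, and indeed best phrased using, this freedom.
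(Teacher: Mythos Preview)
Your proposal is correct and follows essentially the same route as the paper: both normalise to $x_{\infty}=(1,\ldots,1,0,\ldots,0)$ via $A\mapsto DAD^{-3}$, decompose $x_n^3$ along $Ker(A)=\R x_{\infty}^3$, treat the first $k$ coordinates by the Taylor expansion of Theorem~\ref{Theorem2} and the last $m-k$ by the exact relation $\widehat{x_n}=\widehat{u_n}^{1/3}$, then iterate with $u_n=\gamma_n u+v_n$ to extract $v$ at scale $\gamma_n^{1/3}$, and in the degenerate case project onto the vanishing sub-block of $\widehat{u}$ and repeat at scale $\gamma_n^{1/9}$, etc.; the explicit test sequence you write for $2)\Rightarrow 1)$ is, up to renaming $w^{(1)},w^{(2)}$ as $u_1,v_1$, identical to the paper's. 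Your remarks on the bootstrap $\delta_n=O(1/\gamma_n)$ and on the $Ker(A)$-ambiguity of $u,v$ are points the paper glosses over but are handled implicitly by its case analysis on $\lVert\widehat{v_n}\rVert/\gamma_n^{1/3}$.
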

\begin{proof}
We note that here $pr(x)=x*(1,\ldots, 1,0,\ldots ,0)$. Recall that $\widehat{x}=x-pr(x)$, and that $Im(pr)=\mathbb{R}^k$.  

{\bf Proof of 1) $\Rightarrow $ 2)}:

Assume that there exists a sequence $x_n\in V$ with $||x_n||\rightarrow \infty$, $x_n/||x_n||\rightarrow x_{\infty}$ and $x_n+A(x_n^3)$ is bounded. Then it follows by the proof of Proposition \ref{MainTheorem} that $x_{\infty}^3\in Ker(A)$. Then, running the part 1) $\Rightarrow $ 2) of  the proof of Theorem \ref{Theorem2}, we can find $u\in \mathbb{R}^m$ such that $Au+x_{\infty}=0$ and $pr(u)\in V*x_{\infty}^2$. Hence, $u\in pr^{-1}(V*x_{\infty}^2)$. We will later show that indeed the stronger property $pr(u)\in V$ holds. 

If $\widehat{u}=0$ then we stop. Below, we assume that $\widehat{u}\not= 0$. 

\underline{Case 1: All coordinates (of course, from $k+1$ to $m$) of $\widehat{u}$ are non-zero.}

Since in this case the $Ker(A)$ is generated by $x_{\infty}$, we can write, for $||x_n||\sim \gamma _n$
\begin{eqnarray*}
x_n^3=\gamma _nx_{\infty}^3+u_n,
\end{eqnarray*}
where $||u_n||\sim \gamma$. Here $u_n/\gamma _n\rightarrow u$. What we described in the previous paragraph can be translated into (by carefully look at coordinates $1\leq j\leq k$ and $k+1\leq j\leq $ separately):
\begin{eqnarray*}
x_n=\gamma _nx_{\infty}+\frac{1}{3\gamma _n^2}pr(u_n)+\widehat{u}_n^{1/3}+O(1/\gamma _n^2). 
\end{eqnarray*}

Then the condition that $x_n+A(x_n^3)=O(1)$ implies
\begin{eqnarray*}
\gamma _nx_{\infty}+Au_n+ \widehat{u}_n^{1/3}=O(1).
\end{eqnarray*}

If we write $u_n=\gamma _nu+v_n$, then $v_n/\gamma _n\rightarrow 0$. Using that $x_{\infty}=-Au$, we get
\begin{eqnarray*}
Av_n+(\gamma _n\widehat{u}+\widehat{v}_n)^{1/3}=O(1). 
\end{eqnarray*}

Writing $Av_n=A.pr(v_n)+A\widehat{v}_n$, and note that $A.pr(v_n)\in A(\mathbb{R}^k)$, we have that $A\widehat{v_n}+(\gamma _n\widehat{u}+\widehat{v}_n)^{1/3}+O(1)\in A(\mathbb{R}^k)$. 

Now, if $\lim _{n\rightarrow\infty}||\widehat{v_n}||/\gamma _{n}^{1/3}=0$, then we have 
\begin{eqnarray*}
\widehat{u}^{1/3}=\lim _{n\rightarrow\infty}\frac{1}{\gamma _n^{1/3}}[A\widehat{v_n}+(\gamma _n\widehat{u}+\widehat{v}_n)^{1/3}+O(1)]\in A(\mathbb{R}^k)
 \end{eqnarray*}

If $\lim _{n\rightarrow\infty}||\widehat{v_n}||/\gamma _{n}^{1/3}=c$ a non-zero finite limit, then putting $\widehat{v}=\lim _{n\rightarrow\infty}v_n/\gamma _n^{1/3}$, we get  by taking a limit as before $A\widehat{v}+\widehat{u}^{1/3}\in A(\mathbb{R}^k)$.  Hence, again $\widehat{u}^{1/3}\in Im(A)$.

If $\lim _{n\rightarrow\infty}||\widehat{v_n}||/\gamma _{n}^{1/3}=\infty$, then by taking a limit as before we obtain that $A\widehat{v}\in A(\mathbb{R}^k)$ where $\widehat{v}=\lim _{n\rightarrow\infty}v_n/||v_n||$. However, this is a contradiction with the fact that $pr(\widehat{v} )=0$, $||\widehat{v}||=1$ and $Ker(A)=<x_{\infty}>\subset \mathbb{R}^k$. 

From the above analysis, we can assume that $\lim _{n\rightarrow\infty}v_n/\gamma _n^{1/3}=v$ exists. Using that $(\gamma _n\widehat{u}+\widehat{v_n})^{1/3}=\gamma _n^{1/3}\widehat{u}+O(1)$, we obtain that $Av_n+\gamma _n^{1/3}\widehat{u}=O(1)$. Dividing this by $\gamma _n^{1/3}$ and taking the limit when $n\rightarrow\infty$, we obtain $Av+\widehat{u}^{1/3}=0$. 

Now, we can write $v_n=\gamma _n^{1/3}v+w_n$, where $\lim _{n\rightarrow\infty}w_n/\gamma _n^{1/3}=0$. Then we deduce that $Aw_n=O(1)$. This implies that if we write $w_n=\lambda _nx_{\infty}+w_n^*$, then $w_n^*$ is bounded (recall that $Ker(A)$ is generated by $x_{\infty}$ and hence for every $x$ so that $<x,x_{\infty}>=0$ then $||Ax||\geq c||x||$ for some positive constant $c$). We can therefore, assume that $w_n^*$ converges to some $w^*$. 

Using Taylor's expansion 
\begin{eqnarray*}
(1+t)^{1/3}=1+\frac{1}{3}t-\frac{2}{9}t^2+\frac{10}{27}t^3+O(t^4)
\end{eqnarray*}
for $t$ small enough, we obtain
\begin{eqnarray*}
(\gamma _n\widehat{u}+\widehat{v_n})^{1/3}&=&(\gamma _n\widehat{u}+\gamma _n^{1/3}\widehat{v}+\widehat{w_n^*})^{1/3}\\
&=&\gamma _n^{1/3}\widehat{u}+\frac{1}{3\gamma _n^{2/3}}(\gamma _n^{1/3}\widehat{v}+\widehat{w_n^*})*\widehat{u}^{-2/3}-\frac{2}{9\gamma ^{5/3}}(\gamma _n^{1/3}\widehat{v}+\widehat{w_n^*})^2*\widehat{u}^{-5/3}\\
&&+\frac{10}{27\gamma _n^{8/3}}  (\gamma _n^{1/3}\widehat{v}+\widehat{w_n^*})^3*\widehat{u}^{-8/3}+O(1/\gamma _n^{7/3}). 
\end{eqnarray*}

Therefore, we can write
\begin{eqnarray*}
x_n&=&\gamma _nx_{\infty}+\frac{1}{3\gamma _n^2}pr(\gamma _nu+\gamma _n^{1/3}v+\lambda _nx_{\infty})+(\gamma _n\widehat{u}+\gamma ^{1/3}\widehat{v}+\widehat{w_n^*})^{1/3}+O(1/\gamma _n^2). 
\end{eqnarray*}
Since $x_n,x_{\infty}\in V$ for all $n$, by looking at terms of orders $\gamma _n^{1/3}$ and $\gamma _n^{-1/3}$ of the RHS in the above expressions (which correspond to terms of second and third biggest sizes) , we obtain that $\widehat{u}^{1/3}, \widehat{v}*\widehat{u}^{-2/3}\in V$.  

On the other hand, applying the projection map $pr$, we obtain 
\begin{eqnarray*}
pr(x_n)=\gamma _npr(x_{\infty})+\frac{1}{3\gamma _n^2}pr(\gamma _nu+\gamma _n^{1/3}v+\lambda _nx_{\infty})+O(1/\gamma _n^2)
\end{eqnarray*}
for all $n$. It follows as above that $pr(u), pr(v)\in pr(V)$. Hence, we obtain all the algebraic conditions stated in 2).

\underline{Case 2: Some coordinates (of course, from $k+1$ to $m$) of $\widehat{u}$ are zero.}

Now we explain how to get extra algebraic conditions in the case $\widehat{u}$ has some, but not all, zero coordinates. Let $\tau $ be the projection from $Ker(pr)=\mathbb{R}^{m-k}$ onto the vector subspace $V_1$ for which coordinates of $\widehat{u}$ are $0$. From the above analysis, we see that $\limsup _{n\rightarrow\infty}||v_n||/\gamma _n^{1/3}$ is a finite number. In case that number is $0$, then we have $\widehat{u}^{1/3}\in A(\mathbb{R}^k)$ and stop. Otherwise, we note that 
\begin{eqnarray*}
(\gamma _n\widehat{u}+\widehat{v}_n)^{1/3}=\gamma _n^{1/3}\widehat{u}^{1/3}+\tau (\widehat{v}_n)^{1/3}+O(1/\gamma _n^{2/3}). 
\end{eqnarray*}

Therefore, the condition we considered before now become
\begin{eqnarray*}
Av_n+\gamma _n^{1/3}\widehat{u}^{1/3}+\tau (\widehat{v}_n)^{1/3}=O(1). 
\end{eqnarray*}
We use the trick as before, writing $v_n=\gamma _n^{1/3}v+w_n$, where $w_n/\gamma _n^{1/3}\rightarrow 0$. Then the condition above becomes: 
\begin{eqnarray*}
A(\gamma _n^{1/3}v+w_n)+\gamma _n^{1/3}\widehat{u}^{1/3}+ (\gamma _n^{1/3}\tau \widehat{v}+\tau (\widehat{w_n}))^{1/3}=O(1). 
\end{eqnarray*}

Since $Av+\widehat{u}^{1/3}=0$, the above reduces to $Aw_n+(\gamma _n^{1/3}\tau \widehat{v}+\tau (\widehat{w_n}))^{1/3}=O(1)$. This is exactly the case which we dealt with before, where $v_n$ was in the place of $w_n$ and $u_n$ was in the place of $v_n$. We then proceed as before to obtain an addition condition: $\tau \widehat{v}^{1/3}\in Im(A)$. Then, as before, depending on whether $\tau \widehat{v}$, considered as an element of $V_1$, have zero coordinates or not, we will proceed similarly or stop. Since there are at most $m-k$ coordinates to consider, we will stop after at most $m-k$ such considerations. At each step, we add some more algebraic conditions. Thus, at the end we obtain a finite number of algebraic equations which $x_{\infty}$ must satisfy if there is such a sequence $x_n$ as in the statement of the theorem.

{\bf Proof of 2) $\Rightarrow $ 1):} 

Here we give the proof for the case where all coordinates from $k+1$ to $m$ of $\widehat{u}$ are non-zero, the other cases can be treated similarly. In this case, the algebraic conditions in part 2 are: There exist $u,v\in \mathbb{R}^m$ with the following properties:
\begin{eqnarray*}
x_{\infty}^3&\in& Ker(A),\\
Au+x_{\infty}&=&0,\\
Av+\widehat{u}^{1/3}&=&0,\\
\widehat{u}^{1/3}, \widehat{v}*\widehat{u}^{-2/3}&\in& V,\\
pr(u),pr(v)&\in& pr(V).
\end{eqnarray*}

We thus can find $u_1,v_1\in V$ so that $pr(u)=pr(u_1)$ and $pr(v)=pr(v_1)$. We then choose a sequence $\gamma _n\rightarrow\infty$ and define
\begin{eqnarray*}
x_n=\gamma _nx_{\infty}+\frac{1}{3\gamma _n^2}(\gamma _nu_1+\gamma _n^{1/3}v_1)+\gamma _n^{1/3}\widehat{u}^{1/3}+\frac{1}{3\gamma _n^{1/3}}\widehat{v}*\widehat{u}^{-2/3}.
\end{eqnarray*}

From the properties of $u,v,u_1,v_1$ we have that $x_n\in V$ for all $n$. Also, it is easy to check that $||x_n||\rightarrow \infty$ and $x_n/||x_n||\rightarrow x_{\infty}$.  Moreover, since $pr(u)=pr(u_1)$ and $pr(v)=pr(v_1)$
\begin{eqnarray*}
x_n^3&=&\gamma _n^3x_{\infty}^3+\gamma _npr(u_1)+\gamma _n^{1/3}pr(v_1)+\gamma _n\widehat{u}+ \gamma _n^{1/3}\widehat{v}+O(1/\gamma _n^{1/3})\\
&=&\gamma _n^3x_{\infty}^3+\gamma _n u+\gamma _n^{1/3}v+O(1/\gamma _n^{1/3}). 
\end{eqnarray*}
Therefore, $x_n+A(x_n^3)=O(1/\gamma _n^{1/3})$, as wanted.   

\end{proof} 

\begin{remarks}
In the proof of Case 1 in part 1 of Theorem \ref{Theorem2}, we can choose from beginning that $u_n\in Im(A^T)$. Then $v_n$ is also in $Im(A^T)$, and from this the case $\lim _{n\rightarrow\infty}||v_n||/\gamma _n^{1/3}=\infty$ can be quickly discarded, using that $||Ax||\geq c||x||$ for all $x\in Im(A^T)$ for some positive constant $c$, without the need that $x_{\infty}$ generates $Ker(A)$. 

\end{remarks}

As a corollary, we obtain a large class of matrices for which $F_A(x)$ is non-proper. 

\begin{corollary} Assume that there is a vector $x_{\infty}$ whose all coordinates are non-zero with the property that $x_{\infty}\in Im(AA^T)\cap Ker(A)^{1/3}\cap A(Im(AA^T)*x_{\infty}^2)$. Then $F_A(x)=x+(Ax)^3$ is non-proper. 
\label{Corollary3}\end{corollary}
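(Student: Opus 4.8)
The plan is to apply, with the choice $V=Im(AA^T)$, the construction carried out in the proof of the implication 2) $\Rightarrow$ 1) of Theorem \ref{Theorem2}. First I would recall that, by part 3) of Proposition \ref{MainTheorem}, the map $F_A(x)=x+(Ax)^3$ is non-proper if and only if there exists a sequence $x_n\in Im(AA^T)$ with $\|x_n\|\to\infty$ and $x_n+A(x_n^3)$ bounded. So it suffices to produce such a sequence.

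Take $V=Im(AA^T)$. The hypothesis says that $x_{\infty}\in V$, all coordinates of $x_{\infty}$ are non-zero, $x_{\infty}^3\in Ker(A)$ (since $x_{\infty}\in Ker(A)^{1/3}$), and $x_{\infty}\in A(V*x_{\infty}^2)$; choose $u\in V*x_{\infty}^2$ with $Au+x_{\infty}=0$. Now set $x_n=\gamma_n x_{\infty}+\frac{1}{3\gamma_n}\,u*x_{\infty}^{-2}$ for an arbitrary sequence $\gamma_n\to\infty$, exactly as in the proof of Theorem \ref{Theorem2}. Since $x_{\infty}\in V$ and $u*x_{\infty}^{-2}\in (V*x_{\infty}^2)*x_{\infty}^{-2}=V$, each $x_n$ lies in $V=Im(AA^T)$; one checks $\|x_n\|\to\infty$, and from the expansion $x_n^3=\gamma_n^3 x_{\infty}^3+\gamma_n u+O(1/\gamma_n)$ together with $x_{\infty}^3\in Ker(A)$ and $Au=-x_{\infty}$ one gets $x_n+A(x_n^3)=O(1/\gamma_n)$, hence bounded. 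This is precisely the sequence required, so $F_A$ is non-proper.

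The one point that needs care is that Theorem \ref{Theorem2} is stated under the standing hypothesis $Ker(A)^{1/3}\subset V$, which we cannot assume here for $V=Im(AA^T)$. However, that hypothesis is used only in the direction 1) $\Rightarrow$ 2) of the proof of Theorem \ref{Theorem2} — it serves to lift an element $w_n\in Ker(A)$ to some $z_n\in V$ with $z_n^3=w_n$ — and plays no role at all in the construction used for 2) $\Rightarrow$ 1). Hence the displayed construction goes through verbatim, and this is really the only subtlety: the remaining estimates are the same Taylor-expansion bookkeeping already done in the proof of Theorem \ref{Theorem2}. I therefore expect no substantial obstacle beyond correctly isolating which half of Theorem \ref{Theorem2} is being invoked; alternatively one could simply restate and prove the relevant half as a standalone lemma with $V$ only required to contain $x_{\infty}$.
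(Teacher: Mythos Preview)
Your proof is correct and follows essentially the same approach as the paper: reduce via part 3) of Proposition \ref{MainTheorem} to finding an unbounded sequence in $Im(AA^T)$ along which $\widehat{F}_A$ is bounded, then invoke the construction from the 2)~$\Rightarrow$~1) direction of Theorem~\ref{Theorem2} (the paper cites Theorem~\ref{Theorem2Bis} instead, which in the all-nonzero-coordinate case $k=m$ amounts to the same construction). Your observation that the standing hypothesis $Ker(A)^{1/3}\subset V$ is used only in the direction 1)~$\Rightarrow$~2) is exactly the point, and you are in fact more explicit about this than the paper's own one-line proof.
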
 
\begin{proof}
 By Proposition \ref{MainTheorem}, to show the non-properness of $F_A(x)$ it suffices to show the existence of a sequence $x_n\in Im(AA^T)$ so that $||x_n||\rightarrow \infty$ and $x_n+A(x_n^3)$ is unbounded. We can then apply Theorem \ref{Theorem2Bis} for $V=Im(AA^T)$.
\end{proof}  
  
\begin{remarks} If $x_{\infty}\in Im(AA^T)$, then it is obvious that there is $u\in Im(A^T)$ so that $Au=-x_{\infty}$. However, it is crucial for the arguments used here that $u\in Im(AA^T)*x_{\infty}^2$.   
\end{remarks}
  
In the opposite direction, we provide another large class of matrices of corank $1$ for which $F_A$ is proper. The proof is a simple application of Theorem \ref{Theorem2} and hence is skipped. 
\begin{corollary}
Let $A$ be an $m\times m$ matrix of corank $1$. Assume that $Ker(A)$ is generated by the vector $x_{\infty}=(1,\ldots ,\ldots ,1)$ (all coordinates are $1$). Then $F_A(x)$ is proper iff $x_{\infty}\notin Im(AA^T)\cap A(Im(AA^T))$. 

More generally, if $Ker(A)$ is generated by the vector $x_{\infty}=(1,\ldots ,1,0,\ldots ,0)$ (the first $k$ coordinates are $1$ and the last $m-k$ coordinates are $0$), then $F_A(x)$ is proper iff the algebraic conditions in part 2) of Theorem \ref{Theorem2Bis} are not satisfied. 

\label{Corollary4}\end{corollary}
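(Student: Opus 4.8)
The plan is to deduce the corollary from part~3 of Proposition~\ref{MainTheorem} together with Theorem~\ref{Theorem2} (for the first assertion, where $x_\infty$ has no zero coordinate) and Theorem~\ref{Theorem2Bis} (for the general assertion); the one genuinely new ingredient is a rigidity statement, special to corank $1$, that pins down the direction in which a non-properness sequence can escape to infinity.

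First I would record the reduction. By part~3 of Proposition~\ref{MainTheorem}, $F_A$ is non-proper precisely when there is a sequence $x_n\in V:=Im(AA^T)$ with $\|x_n\|\to\infty$ and $x_n+A(x_n^3)=O(1)$; call such a sequence a non-properness sequence. Passing to a subsequence, $x_n/\|x_n\|\to x_*$ for some unit vector $x_*\in V$ ($V$ being a finite-dimensional, hence closed, subspace). Dividing $x_n+A(x_n^3)=O(1)$ by $\|x_n\|^3$ and letting $n\to\infty$ gives $A(x_*^3)=0$, that is $x_*^3\in Ker(A)$. Here I would use the corank $1$ hypothesis: since $Ker(A)=\langle x_\infty\rangle$ we get $x_*^3=\lambda x_\infty$ for some $\lambda\in\mathbb{R}$, and because every real number has a \emph{unique} real cube root this forces $x_*$, coordinate by coordinate, to equal $\lambda^{1/3}x_\infty$; being a unit vector, $x_*=\pm x_\infty/\|x_\infty\|$. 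Replacing $x_n$ by $-x_n$ if necessary (harmless, as $x\mapsto x+A(x^3)$ is odd), we may assume $x_n/\|x_n\|\to\xi:=x_\infty/\|x_\infty\|$. In particular, \emph{if $x_\infty\notin Im(AA^T)$ there is no non-properness sequence and $F_A$ is proper}; this matches the stated criteria, since then $x_\infty\notin Im(AA^T)\cap A(Im(AA^T))$, and the algebraic conditions in part 2) of Theorem~\ref{Theorem2Bis} likewise fail, one of them being $Au=-x_\infty$, which already forces $x_\infty\in Im(A)=Im(AA^T)$.

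Next I would treat the remaining case $x_\infty\in V=Im(AA^T)$ and invoke the characterizations. For the first assertion, $x_\infty=(1,\dots,1)$ has all coordinates non-zero and $Ker(A)^{1/3}=\langle x_\infty\rangle\subset V$, so Theorem~\ref{Theorem2} applies with this $V$ and the unit direction $\xi$: a non-properness sequence with $x_n/\|x_n\|\to\xi$ exists iff $\xi\in A(V*\xi^{2})\cap Ker(A)^{1/3}$. Since $\xi^2$ is a positive multiple of $(1,\dots,1)$ we have $V*\xi^2=V$, and $\xi\in Ker(A)^{1/3}$ automatically, so this reads $x_\infty\in A(Im(AA^T))$. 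By the previous paragraph \emph{every} non-properness sequence can be chosen with direction $\xi$, hence $F_A$ is non-proper iff $x_\infty\in Im(AA^T)\cap A(Im(AA^T))$, which is the first assertion. For the general assertion, $x_\infty=(1,\dots,1,0,\dots,0)$ may have zero coordinates, so I would use Theorem~\ref{Theorem2Bis} instead of Theorem~\ref{Theorem2} (legitimate with $V=Im(AA^T)$ since $x_\infty\in V$ here): a non-properness sequence with direction $\xi$ exists iff the algebraic conditions in part 2) of Theorem~\ref{Theorem2Bis} hold. Conversely those conditions contain $Au=-x_\infty$, and hence already imply $x_\infty\in Im(A)=Im(AA^T)=V$, so they can only be satisfied in the case just handled. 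Thus $F_A$ is non-proper iff those conditions hold, i.e.\ proper iff they fail; and when $k=m$ one checks directly that, $pr$ being the identity and $\widehat{u}=0$, they collapse to ``$\exists\,u\in Im(AA^T)$ with $Au=-x_\infty$'', so the two assertions agree.

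I expect the main obstacle to be the rigidity step of the second paragraph, forcing the escape direction $x_*$ to be a scalar multiple of $x_\infty$: this is exactly where one-dimensionality of $Ker(A)$ and uniqueness of real cube roots are both indispensable, and without corank $1$ one learns only that $x_*^3\in Ker(A)$, which does not decide properness. Everything else — matching the unit normalization of $\xi$ against the statements of Theorems~\ref{Theorem2} and~\ref{Theorem2Bis}, using $Im(AA^T)=Im(A)$, and unwinding the degenerate $k=m$ form of the corank $1$ algebraic conditions — is routine bookkeeping.
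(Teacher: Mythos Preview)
Your proposal is correct and follows essentially the same route as the paper: reduce via part~3 of Proposition~\ref{MainTheorem} to $\widehat{F}_A$ on $V=Im(AA^T)$, observe that any escape direction must lie in $Ker(A)^{1/3}$ and hence (by corank~$1$ and uniqueness of real cube roots) be $\pm x_\infty/\|x_\infty\|$, deduce $x_\infty\in V$, and then invoke the characterization theorems. The paper's own proof is a two-line compression of exactly this argument (it asserts ``if $F_A$ is non-proper then $x_\infty\in Im(AA^T)$'' and then cites Theorem~\ref{Theorem2Bis}); you have simply made explicit the rigidity step, the odd-symmetry normalization, and the use of $Im(A)=Im(AA^T)$ that the paper leaves implicit.
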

\begin{proof}
Note that if $F_A(x)$ is non-proper, then we must have $x_{\infty}\in Im(AA^T)=:V$.  Hence, we can assume from the beginning that $x_{\infty}\in V$, and hence also $Ker(A)^{1/3}=Ker(A)\subset V$. Then the corollary follows from Theorem \ref{Theorem2Bis}. 
\end{proof}

\begin{remark}

We now use Corollary \ref{Corollary4} to construct examples of $3\times 3$ matrices $A$ for which $F_A(x)$ is not proper. We write such a matrix as:
\[ \left( \begin{array}{ccc}
a_{1,1}&a_{1,2}&a_{1,3}\\
a_{2,1}&a_{2,2}&a_{2,3}\\
a_{3,1}&a_{3,2}&a_{3,3}\\
\end{array}\right) \]
We want $A$ to be a matrix of rank $2$, whose kernel $Ker(A)$ is generated by the vector $x_{\infty}=(1,1,1)$ and $x_{\infty}\in Im(AA^T)\cap Im(A^2A^T)$. Hence, after permuting of coordinates, from the assumption that $A$ is of rank $2$ we get 
\begin{eqnarray*}
(a_{3,1},a_{3,2},a_{3,3})=\lambda (a_{1,1},a_{1,2},a_{1,3})+\mu (a_{2,1},a_{2,2},a_{2,3}),
\end{eqnarray*}
for some real numbers $\lambda $ and $\mu$. 

The condition that $x_{\infty}\in Ker(A)$ is generated into  two equations $a_{1,1}+a_{1,2}+a_{1,3}=0$ and $a_{2,1}+a_{2,2}+a_{2,3}=0$. 

Since $Im(AA^T)=Im(A)$ (recall that $\mathbb{R}^m=Im(A^T)\oplus Ker(A)$), we have that $Im(AA^T)=\{(x,y,z):~z=\lambda x+\mu y\}$. Hence, the condition that $x_{\infty}\in Im(AA^T)$ is generated into that $1=\lambda +\mu$. 

It then follows that $Im(AA^T)=Im(A)$ is generated by $x_{\infty}$ and $(1,0,\lambda)$. Since $Im(A^2A^T)=Im(A^2)=A(Im(A))$ and $Ax_{\infty}=0$,  the condition that $x_{\infty}\in Im(A^2)$ is generated into that: there is a real number $c$ for which $A.(c,0,\lambda c)=x_{\infty}$. This is translated into that $a_{1,1}+a_{1,3}\lambda =a_{2,1}+a_{2,3}\lambda \not= 0$. 

We note that there are 4  constraints between the parameters $a_{1,1},a_{1,2},a_{1,3},a_{2,1},a_{2,2},a_{2,3},\lambda ,\mu$ (and an inequality $a_{2,1}+a_{2,3}\lambda \not= 0$), which give rise to a constructible set of dimension $4$.

\end{remark}

Theorem \ref{Theorem2} suggest us to propose the following question: 

{\bf Conjecture C.} The set $\mathcal{P}$ (see Definition 1) is constructible. Moreover, we conjecture that it is defined by the algebraic conditions given in part 2 of Theorem \ref{Theorem2Bis}. 

Theorem \ref{Theorem2Bis} gives an affirmative answer to Conjecture in case $A$ has corank $1$. The results in the previous subsection provide an affirmative answer to this conjecture to the cases where $A$ is invertible or has rank $1$, as well as several other cases. Also, Hadamard's theorem and the fact that the set of matrices for which $F_A(x)$ is injective provide an affirmative answer to Conjecture C for injective maps $F_A(x)$. We will go more deeply into this case in the next subsection. Now we will give more support for Conjecture C by showing that the necessary conditions and sufficient conditions for a map $F_A(x)$ to be proper are not too far apart. 

{\bf Definition 3.} Let $A$ be an $m\times m$ matrix, $0\not= x_{\infty}\in \mathbb{R}^m$ and $V\subset \mathbb{R}^m$ a vector subspace. We define $\mathcal{S}(x_{\infty},V)$ to be the set of algebraic conditions given in part 2) of Theorem \ref{Theorem2Bis}. For example: 

If all coordinates of $x_{\infty}$ are non-zero, then $\mathcal{S}(x_{\infty},V)$ consists of two conditions (see also Theorem \ref{Theorem2}):  $x_{\infty}^3\in Ker(A)$ and there exists $u\in V*x_{\infty}^2$ so that $Au+x_{\infty}=0$. (Note that here we disregard the assumptions in Theorem \ref{Theorem2} that $Ker(A)^{1/3}\subset V$.)

If the first $k$ coordinates of $x_{\infty}$ are non-zero and the last $m-k$ coordinates of $x_{\infty}$ are non-zero, then we define $pr:\mathbb{R}^m\rightarrow \mathbb{R}^{k}$ be the projection onto the first $k$ coordinates. Then $\mathcal{S}(x_{\infty},V)$ contains the condition that there is $u$ so that $Au+x_{\infty}=0$. Depending on where the coordinates of $\widehat{u}$, considered as an element of $Ker(pr)$, are $0$, we have different cases for defining $\mathcal{S}(x_{\infty},V)$. In the simplest case, where all $m-k$ coordinates of $\widehat{u}$ are non-zero, then $\mathcal{S}(x_{\infty},V)$ consists of the following extra conditions: 

\begin{eqnarray*}
x_{\infty}^3&\in&Ker(A),\\
Au+x_{\infty}&=&0,\\
Av+\widehat{u}^{1/3}&=&0,\\
\widehat{u}^{1/3}, \widehat{v}*\widehat{u}^{-2/3}&\in& V,\\
pr(u),pr(v)&\in& pr(V).
\end{eqnarray*}

(Again, note that we disregard the assumption that $A$ has corank $1$ in Theorem \ref{Theorem2Bis}.)

{\bf Definition 4.} Let $A$ be an $m\times m$ matrix and $0\not= x_{\infty}\in \mathbb{R}^m$. We define $\mathcal{N}(x_{\infty})$ to be the set of algebraic conditions obtained in the proof of the step of 1) $\Rightarrow$ 2 in Theorem \ref{Theorem2Bis}, but we disregard the assumptions that $A$ has corank $1$ or $Ker(A)^{1/3}$. In particular, we have to modify the first part by writing $x_n^3=z_n+u_n$, for some $z_n\in Ker(A)$. We note that $\mathcal{N}(x_{\infty})$ is a subset of $\mathcal{S}(x_{\infty},V)$, where conditions concerning images of $A$ are kept, while conditions  having the vector space $V$ in the statement are discarded. For example: 

 If all coordinates of $x_{\infty}$ are non-zero, then $\mathcal{N}(x_{\infty})$ consists of two conditions (see also Theorem \ref{Theorem2}):  $x_{\infty}^3\in Ker(A)$ and there exists $u\in \mathbb{R}^m$ so that $Au+x_{\infty}=0$. 

If the first $k$ coordinates of $x_{\infty}$ are non-zero and the last $m-k$ coordinates of $x_{\infty}$ are non-zero, then we define $pr:\mathbb{R}^m\rightarrow \mathbb{R}^{k}$ be the projection onto the first $k$ coordinates. Then $\mathcal{N}(x_{\infty})$ contains the condition that there is $u$ so that $Au+x_{\infty}=0$. Depending on where the coordinates of $\widehat{u}$, considered as an element of $Ker(pr)$, are $0$, we have different cases for defining $\mathcal{N}(x_{\infty})$. In the simplest case, where all $m-k$ coordinates of $\widehat{u}$ are non-zero, then $\mathcal{N}(x_{\infty})$ consists of the following extra conditions: 

\begin{eqnarray*}
x_{\infty}^3&\in& Ker(A),\\
Au+x_{\infty}&=&0,\\
Av+\widehat{u}^{1/3}&=&0. 
\end{eqnarray*}

Then, the following result is tautological. 

\begin{theorem} Let $A$ be a matrix and $x_{\infty}\in \mathbb{R}^m$. Let $V\subset \mathbb{R}^m$ be a vector subspace.

1) Assume that there is a sequence $x_n\in V$ so that $||x_n||\rightarrow \infty$, $x_n/||x_n||\rightarrow x_{\infty}$, and $x_n+A(x_n^3)$ is bounded. Then $x_{\infty}\in V$ and $\mathcal{N}(x_{\infty})$ must be satisfied. 

2) If $x_{\infty}\in V$ and $\mathcal{N}(x_{\infty})$ is satisfied, then there is a sequence  $x_n\in \mathbb{R}^m$ so that $||x_n||\rightarrow \infty$, $x_n/||x_n||\rightarrow x_{\infty}$, and $x_n+A(x_n^3)$ is bounded. (Here, we may not have $x_n\in V$.)

3) If $x_{\infty}\in V$ and $\mathcal{S}(x_{\infty},V)$ is satisfied, then there is a sequence $x_n\in V$ so that $||x_n||\rightarrow \infty$, $x_n/||x_n||\rightarrow x_{\infty}$, and $x_n+A(x_n^3)$ is bounded.
\label{TheoremNecessarySufficient}\end{theorem}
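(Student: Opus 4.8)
The plan is to recognise that Theorem~\ref{TheoremNecessarySufficient} is essentially a repackaging of the proof of Theorem~\ref{Theorem2Bis}, with the corank-$1$ hypothesis stripped out: that proof splits into an ``easy'' constructive direction which never used corank $1$, and a ``hard'' necessity direction in which the corank-$1$ hypothesis enters only at two identifiable points, both of which can be circumvented. So the whole argument is bookkeeping: identify those two points, reroute around them, and then read off which conditions survive into $\mathcal N(x_\infty)$ and which into $\mathcal S(x_\infty,V)$.

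\textbf{Part 1).} First I would note that $x_\infty\in V$ is forced by the hypothesis: $V$ is a finite-dimensional, hence closed, subspace, and each $x_n/\|x_n\|$ lies in $V$, so the limit $x_\infty$ does too. For the algebraic conditions I would rerun the implication $1)\Rightarrow 2)$ in the proof of Theorem~\ref{Theorem2Bis} with exactly the two modifications anticipated in Definition~4. The first place corank $1$ was used is in writing $x_n^3=\gamma_n x_\infty^3+u_n$ because $Ker(A)$ equalled $\langle x_\infty\rangle$; I replace this by the purely linear-algebraic decomposition $x_n^3=z_n+u_n$ with $z_n\in Ker(A)$ and $u_n\in Im(A^T)$ chosen so that $Au_n=A(x_n^3)$ and $\|u_n\|\sim\|x_n\|$. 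The second place is the discarding of a rescaled subsequence with $\|\widehat v_n\|/\gamma_n^{1/3}\to\infty$, which previously used $Ker(A)\subset\mathbb{R}^k$; by the Remark following Theorem~\ref{Theorem2Bis}, keeping $u_n$ (hence the auxiliary $v_n$) inside $Im(A^T)$ and using the coercivity bound $\|Ax\|\ge c\|x\|$ on $Im(A^T)$ discards it instead. Passing to subsequences for convergence of $u_n/\gamma_n$, $v_n/\gamma_n^{1/3}$, and so on, and running the (at most $m-k$) inductive steps verbatim, one extracts exactly the relations comprising $\mathcal N(x_\infty)$; the conclusions of the original argument that mention $V$ (such as $\widehat u^{1/3}\in V$ or $pr(u)\in pr(V)$) are simply not asserted here, which is legitimate precisely because they were the conclusions whose derivation used corank $1$ in an essential way.

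\textbf{Parts 2) and 3).} For part 3) I would quote the implication $2)\Rightarrow 1)$ of Theorem~\ref{Theorem2Bis} word for word: its hypothesis is precisely ``$x_\infty\in V$ and $\mathcal S(x_\infty,V)$ holds'', and its construction — choosing $u_1,v_1\in V$ with $pr(u)=pr(u_1)$, $pr(v)=pr(v_1)$, and using $\widehat u^{1/3},\ \widehat v*\widehat u^{-2/3}\in V$, together with the analogous vectors from the inductive steps — produces $x_n\in V$ with $\|x_n\|\to\infty$, $x_n/\|x_n\|\to x_\infty$, and, via $x_n^3=\gamma_n^3x_\infty^3+\gamma_n u+\gamma_n^{1/3}v+O(\gamma_n^{-1/3})$ and the relations $x_\infty^3\in Ker(A)$, $Au+x_\infty=0$, $Av+\widehat u^{1/3}=0$, gives $x_n+A(x_n^3)=O(\gamma_n^{-1/3})$; none of this uses corank $1$ (one may first normalise to $x_\infty=(1,\dots,1,0,\dots,0)$ by the substitution $A\mapsto DAD^{-3}$, as in Theorem~\ref{Theorem2Bis}). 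Part 2) is the same construction but with the requirement $x_n\in V$ dropped, so one simply takes $u_1=u$, $v_1=v$ and the inductive vectors equal to themselves; then only the $V$-free relations — that is, exactly $\mathcal N(x_\infty)$ — are needed, and the identical computation gives $x_n+A(x_n^3)=O(\gamma_n^{-1/3})$ with $\|x_n\|\to\infty$ and $x_n/\|x_n\|\to x_\infty$.

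The only genuine point of care — there is no real obstacle — is the verification underlying all of the above: that the corank-$1$ hypothesis enters the proof of Theorem~\ref{Theorem2Bis} solely through (i) the explicit shape of $Ker(A)$ when decomposing $x_n^3$ and (ii) the dismissal of divergent rescaled subsequences, and that after substituting the $z_n+u_n$ decomposition for (i) and the $Im(A^T)$-coercivity argument for (ii), every remaining Taylor expansion and norm estimate in that proof is untouched. Granting this, the three statements are, as the paper asserts, tautological.
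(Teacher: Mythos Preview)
Your proposal is correct and matches the paper's approach exactly: the paper's entire proof consists of the single word ``tautological'', since Definitions~3 and~4 are deliberately crafted so that $\mathcal{N}(x_\infty)$ is \emph{defined} as whatever the $1)\Rightarrow 2)$ argument of Theorem~\ref{Theorem2Bis} yields once the corank-$1$ decomposition is replaced by $x_n^3=z_n+u_n$ with $z_n\in Ker(A)$, and $\mathcal{S}(x_\infty,V)$ is \emph{defined} as the hypothesis list that the $2)\Rightarrow 1)$ construction consumes. You have simply written out the unpacking of that tautology --- identifying the two uses of corank~$1$ and invoking the $Im(A^T)$-coercivity Remark for the second --- which the paper leaves to the reader.
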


\subsection{Applications to the (real) Jacobian conjecture}
In this subsection, we present some applications to the  (real) Jacobian conjecture. We are also interested in its real version for the map $F_A$: if the Jacobian of $F_A(x)$ is nowhere zero on $\mathbb{R}^m$, then $F_A(x)$ is invertible. We recall that by Hadamard's theorem, $F_A(x)$ is invertible if and only if it is proper, therefore results from the previous subsections can be applied. The $1$-dimensional example $F_A(t)=1+t^3$ shows that there are matrices $A$ which are not Druzkowski but still $JF_A(x)$ is nowhere zero.

We first provide a new general class of Druzkowsi maps for which the Jacobian conjecture is known. (For some other general classes, see for example \cite{chamberland-meisters, druzkowski, gorni-gasinska-zampieri,  bondt, truong} and references therein. For example,  if $JF_A(x)$ and $JF_A(y)$ commute for all $x,y$ (this is the case if $JF_A(x)$ is symmetric or anti-symmetric), then the Jacobian conjecture holds.)
\begin{proposition}
Let $A=(a_{i,j})_{1\leq i,j\leq m}$ be a Druzkowski matrix with real coefficients. Assume that there are $\delta _1,\ldots ,\delta _m\in \{\pm 1\} $ so that for all $(i,j),(k,l)$: $\delta _i\delta _j\delta _k\delta _l a_{i,j}a_{k,l}\geq 0$. Then the map $F_A(x)=x+(Ax)^3$ satisfies the Jacobian conjecture.
\label{Theorem0}\end{proposition}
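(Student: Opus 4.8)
The plan is to use a diagonal sign change to reduce to the case where all entries $a_{i,j}$ are nonnegative, and then to exploit part (5) of Theorem \ref{Theorem1} together with Hadamard's theorem (Theorem \ref{TheoremHadamard}). First I would set $D=\mathrm{diag}(\delta_1,\ldots,\delta_m)$; since each $\delta_i=\pm 1$ we have $D^{-3}=D^{-1}=D$, so the conjugated matrix is $B=DAD$, with entries $b_{i,j}=\delta_i\delta_j a_{i,j}$. By hypothesis $\delta_i\delta_j\delta_k\delta_l a_{i,j}a_{k,l}\geq 0$ for all pairs; taking $(k,l)=(i,j)$ gives $a_{i,j}^2\geq 0$ (no information), but taking a single fixed pair $(k,l)$ with $a_{k,l}\neq 0$ and letting $(i,j)$ vary shows that all the nonzero numbers $\delta_i\delta_j a_{i,j}=b_{i,j}$ have the \emph{same sign} as the fixed $\delta_k\delta_l a_{k,l}$; replacing $D$ by $-D$ if necessary (which does not change $F_A$ vs.\ $F_B$ being simultaneously proper, and does not affect the Keller property), we may assume $b_{i,j}\geq 0$ for all $i,j$. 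Moreover $B$ is again Druzkowski: by part (2) of Proposition \ref{MainTheorem} (or directly, since $JF_B(z)=D\,JF_A(x)\,D$ when $z=D^3x=Dx$, so $\det JF_B\equiv\det JF_A\equiv 1$), $B$ is a Druzkowski matrix. Since $F_A$ is Keller iff $F_B$ is Keller, and (by Theorem \ref{TheoremHadamard}) $F_A$ is an automorphism iff it is proper, which by part (4) of Theorem \ref{Theorem1} holds iff $F_B$ is proper, it suffices to prove that $F_B$ is proper when all entries of $B$ are nonnegative.

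The next step is to verify the hypothesis of part (5) of Theorem \ref{Theorem1} for $B$ with the choice $\zeta=(1,\ldots,1)$. We must check $\langle B(x^3),x\rangle\geq 0$ for all $x\in\R^m$ (a fortiori for $x\in\mathrm{Im}(BB^T)$). Writing this out,
\begin{eqnarray*}
\langle B(x^3),x\rangle=\sum_{i=1}^m x_i\sum_{j=1}^m b_{i,j}x_j^3=\sum_{i,j}b_{i,j}\,x_i\,x_j^3.
\end{eqnarray*}
Since all $b_{i,j}\geq 0$, this is not yet manifestly nonnegative because $x_i x_j^3$ can be negative. Here is where the Druzkowski condition must enter: for a Druzkowski matrix one has the classical identities (used in the work of Druzkowski, e.g.\ as recorded in \cite{essen}) that the $1$-forms $\ell_i=\sum_j b_{i,j}x_j$ satisfy $\sum_i b_{k,i}\ell_i^2=0$ identically for each $k$, equivalently $B(\,(Bx)^2\,)=0$ for all $x$; this is the ``$B^2=0$ after the standard reduction'' phenomenon. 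I would use this to rewrite $\langle B(x^3),x\rangle$ in a way that kills the indefinite cross terms. Concretely, one can try to show that under the sign condition the quadratic-in-$x^2$ expression $\sum_{i,j}b_{i,j}x_ix_j^3$ is a nonnegative combination of squares; the sign hypothesis $\delta_i\delta_j\delta_k\delta_l a_{i,j}a_{k,l}\geq 0$ is precisely the condition ensuring that all relevant coefficient products in such an SOS-type rearrangement are nonnegative. If the strict inequality $\langle B(x^3),x\rangle\neq 0$ for $x\neq 0$ in $\mathrm{Im}(BB^T)$ can be obtained, part (5) even gives properness of $F_{-B}$, but for the Jacobian conjecture we only need $F_B$ proper.

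Once $F_B$ is shown proper, Hadamard's theorem (Theorem \ref{TheoremHadamard}) upgrades this to: $F_B$ is a $C^1$-diffeomorphism; being a polynomial Keller map that is injective, it is then a polynomial automorphism (this is the standard fact for Druzkowski/Keller maps recalled in the text, that on $\C$ injectivity implies automorphism, and the real injective polynomial Keller map case follows similarly from properness plus the Jacobian condition). Undoing the diagonal conjugation, $F_A=D^{-1}\circ F_B\circ D$ is likewise a polynomial automorphism, so the Jacobian conjecture holds for $F_A$. \textbf{The main obstacle} I anticipate is the middle step: establishing the inequality $\langle B(x^3),x\rangle\geq 0$. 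It is \emph{not} true for an arbitrary nonnegative matrix $B$ (e.g.\ $B$ the $2\times 2$ matrix with a single nonzero off-diagonal entry gives $\langle B(x^3),x\rangle=b\,x_1x_2^3$, which changes sign); so one genuinely needs to combine the nonnegativity of the $b_{i,j}$ with the Druzkowski identities $B((Bx)^2)=0$, and to see that the precise form of the hypothesis $\delta_i\delta_j\delta_k\delta_l a_{i,j}a_{k,l}\geq 0$ (rather than merely $a_{i,j}\geq 0$ up to signs) is what makes the relevant rearrangement work. Pinning down that algebraic identity and checking the signs of its coefficients is the heart of the argument; the rest is bookkeeping with Proposition \ref{MainTheorem}, Theorem \ref{Theorem1}(4)--(5), and Theorem \ref{TheoremHadamard}.
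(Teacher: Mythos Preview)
Your first move --- conjugating by the sign matrix $D=\mathrm{diag}(\delta_1,\ldots,\delta_m)$ to obtain a matrix $B=DAD^{-3}=DAD$ whose nonzero entries all share one sign --- is exactly what the paper does. After that point, however, the two arguments diverge, and yours has a genuine gap.

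The paper does not try to prove properness at all. Once the entries of $\widehat{A}=DAD^{-3}$ have a single sign, it simply invokes the theorem of Yu \cite{yu} and Chau--Nga \cite{chau-nga} (recalled in the introduction): if $F(x)-x$ has no linear term and all its coefficients are non-positive, then the Jacobian conjecture holds for $F$. This disposes of the non-positive case immediately. For the non-negative case the paper uses a \emph{complex} diagonal matrix $D_1=\sqrt{-1}\,I$, so that $D_1\widehat{A}D_1^{-3}=-\widehat{A}$ has all non-positive entries, and applies Yu/Chau--Nga again. Your remark ``replacing $D$ by $-D$ if necessary'' does not accomplish this sign flip, since $(-D)A(-D)=DAD=B$; one really needs to pass to $\C$, which is harmless because the Jacobian conjecture is a statement over $\C$ anyway.

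Your route via Theorem~\ref{Theorem1}(5) runs into the obstacle you yourself flag, and I do not see how to close it. The inequality $\langle B(x^3),x\rangle=\sum_{i,j}b_{i,j}x_ix_j^3\ge 0$ is false for general nonnegative $B$ (your own $2\times2$ example), and the ``Druzkowski identity'' $B((Bx)^2)=0$ that you propose to use is \emph{not} a consequence of $\det JF_B\equiv 1$; what the Keller condition gives is nilpotency of $\mathrm{diag}((Bx)^2)\,B$, a much weaker statement. (One may always assume $B^2=0$ after a further reduction, but that changes the matrix and destroys the nonnegativity.) So the ``heart of the argument'' you describe is not just unfinished --- the tool you plan to use is unavailable. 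Finally, even if properness of $F_B$ on $\R^m$ were established, Hadamard's theorem would only yield bijectivity on $\R^m$; concluding that $F_A$ is a polynomial automorphism of $\C^m$ requires a further step that you have not supplied. The paper sidesteps all of this by appealing directly to the Yu/Chau--Nga result.
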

\begin{proof}
Let $D$ be the diagonal matrix, whose diagonal entries are $\delta _j$'s. We note that $D^2=Id$. The assumption of the theorem implies that $\widehat{A}=DAD^{-3}$ is a Druzkowski matrix whose entries have the same sign. In the case all entries of $\widehat{A}$ are non-positive, the assertion follows from the result by \cite{yu, chau-nga} mentioned above. Now consider the case all entries of $\widehat{A}$ are non-negative. We let $D_1$ be the diagonal matrix whose all entries are $\sqrt{-1}$. Then $D_1\widehat{A}D_1^{-3}$ is also a Druzkowski matrix whose all entries are non-positive, and again we can apply the result of \cite{yu, chau-nga}.  
\end{proof} 

Applying Theorem \ref{Theorem1} we obtain the following result.
\begin{theorem}
Let $A$ be a matrix so that the Jacobian of $F_A(x)=x+(Ax)^3$ is nowhere zero. Then, $F_A(x)$ is invertible if one of the following conditions is satisfied: 

 (1) $Ker(A)\subset Ker(AA^T)$, where $A^T$ is the transpose of $A$. This condition is satisfied for example when $A$ is invertible, symmetric or anti-symmetric.  
 
 (2) $AA^T$ has rank $1$. 
 
 (3) $A$ is an upper or lower triangular matrix.  
 
 (4) $F_B(x)$ is proper, where $B=DAD^{-3}$ for an invertible {\bf diagonal} matrix $D$.  
 
 (5) There is a vector $\zeta =(\zeta _1,\ldots ,\zeta _m)$ with $\zeta _i>0$ for all $i$ such that $<A(x^3),\zeta * x>\geq 0$ for all $x\in Im(AA^T)$. Then $F_A(x)$ is proper. If moreover, $<A(x^3),\zeta * x> \not= 0$ for all $0\not= x\in Im(AA^T)$, then $F_{-A}(x)$ is also proper. 
 
  (6) $Ker(A)$ is generated by the vector $x_{\infty}=(1,\ldots ,1)$ and $x_{\infty}\notin Im(AA^T)\cap Im(AAA^T)$. 
\label{TheoremJacobian1}\end{theorem}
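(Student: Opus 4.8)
The plan is to obtain Theorem~\ref{TheoremJacobian1} immediately by combining Theorem~\ref{Theorem1} with Hadamard's theorem (Theorem~\ref{TheoremHadamard}). First I would record the trivial observations that $F_A(x)=x+(Ax)^3$ is a polynomial self-map of $\mathbb{R}^m$, hence of class $C^1$, and that by hypothesis $\det(JF_A(x))$ never vanishes on $\mathbb{R}^m$. Consequently Theorem~\ref{TheoremHadamard} applies and says that $F_A$ is a $C^1$-diffeomorphism — in particular bijective, hence invertible — precisely when $F_A$ is proper. So the whole statement reduces to verifying that each of the conditions (1)--(6) forces $F_A$ to be proper.

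But that is exactly what Theorem~\ref{Theorem1} provides: conditions (1)--(6) in the present statement are verbatim the conditions (1)--(6) of Theorem~\ref{Theorem1}, and in each of those cases Theorem~\ref{Theorem1} concludes that $F_A(x)$ is proper. (For (4) the hypothesis is literally that $F_B$ is proper for some $B=DAD^{-3}$ with $D$ invertible diagonal, and part 2) of Proposition~\ref{MainTheorem} transports properness of $F_B$ back to $F_A$; for (5) the stated inequality on the vector $\zeta$ yields properness of $F_A$ by the argument in the proof of Theorem~\ref{Theorem1}, which is all we need here.) Chaining the two implications, ``condition $(k)$ $\Rightarrow$ $F_A$ proper $\Rightarrow$ (by Hadamard) $F_A$ a diffeomorphism'', completes the proof.

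There is no real obstacle beyond bookkeeping: the analytic content lives entirely in Theorem~\ref{Theorem1} and in the cited Hadamard theorem, both of which we are free to invoke. The only points I would make explicit in the write-up are (a) that the Keller-type hypothesis ``$JF_A$ nowhere zero'' is exactly the input required by Theorem~\ref{TheoremHadamard}, and (b) that ``invertible'' here means bijective with smooth inverse, not polynomially invertible — the latter being the Jacobian conjecture itself and not asserted by this theorem.
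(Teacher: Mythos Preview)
Your proposal is correct and matches the paper's own approach exactly: the paper states that Theorem~\ref{TheoremJacobian1} is obtained by applying Theorem~\ref{Theorem1}, having already noted in the same subsection that by Hadamard's theorem $F_A$ is invertible iff it is proper. Your write-up simply makes explicit the two-step chain ``condition $(k)\Rightarrow$ proper (Theorem~\ref{Theorem1}) $\Rightarrow$ invertible (Theorem~\ref{TheoremHadamard})'' that the paper leaves implicit.
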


Applying Theorem \ref{Theorem2Bis} we obtain the following result. 

\begin{theorem}
Let $A$ be an $m\times m$ matrix of  corank $1$.  Let $x_{\infty}$ be a vector whose first $k$ coordinates are non-zero and whose last $m-k$ coordinates are $0$, where $k\geq 1$.  Let $V=Im(A)$, and assume that $x_{\infty}\in V$. Define $pr:\mathbb{R}^m\rightarrow \mathbb{R}^k$ to be the usual projection to the first $k$ coordinates. Denote also by $\widehat{x}=x-pr(x)$. 

Then the following 2 statements are equivalent. 

1) The map $F_A(x)$ is not invertible.

2) Some algebraic conditions must be satisfied, as in the statement and proof of Theorem \ref{Theorem2Bis}. 
\label{TheoremJacobian2}\end{theorem}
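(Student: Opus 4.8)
The plan is to deduce this from Hadamard's theorem together with the reductions already established. Throughout we work under the standing assumption of this subsection, as in Theorem \ref{TheoremJacobian1}, that $\det JF_A$ vanishes nowhere; then Theorem \ref{TheoremHadamard} says that $F_A$ is invertible (equivalently, a $C^1$-diffeomorphism) if and only if $F_A$ is proper, so statement 1) — that $F_A$ is \emph{not} invertible — is equivalent to $F_A$ being \emph{not} proper. Next, by part 3) of Proposition \ref{MainTheorem}, $F_A$ is proper if and only if $\widehat{F}_A(x)=x+A(x^3)$ is proper on $Im(AA^T)$; and since $A$ and $AA^T$ have the same rank while $Im(AA^T)\subseteq Im(A)$, we in fact have $Im(AA^T)=Im(A)=V$. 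Thus $F_A$ is non-proper precisely when there is a sequence $x_n\in V$ with $\|x_n\|\to\infty$ and $x_n+A(x_n^3)$ bounded.

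The role of the corank-$1$ hypothesis is to pin down the direction of such a sequence. Passing to a subsequence we may assume $x_n/\|x_n\|$ converges to some unit vector $x'_{\infty}\in V$, and the argument in part 1) of Proposition \ref{MainTheorem} gives $A\big((x'_{\infty})^3\big)=0$, i.e. $(x'_{\infty})^3\in Ker(A)$. Since $\dim Ker(A)=1$, the vector $(x'_{\infty})^3$ is a fixed scalar multiple of a generator of $Ker(A)$, and — because each real number has a unique real cube root — this determines $x'_{\infty}$ up to positive scaling; in particular the set of coordinates at which $x'_{\infty}$ vanishes is exactly the zero set of a generator of $Ker(A)$. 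After a permutation of coordinates and the substitution $A\mapsto DAD^{-3}$ for a suitable invertible diagonal $D$ — which by part 2) of Proposition \ref{MainTheorem} affects properness, and hence (via Theorem \ref{TheoremHadamard}) invertibility, of $F_A$ only through a linear change, and which transports $V=Im(A)$ and $x'_{\infty}$ compatibly — we may assume that $Ker(A)$ is generated by $x_{\infty}^3$, where $x_{\infty}=(1,\dots,1,0,\dots,0)$ (first $k$ entries equal to $1$) is the vector in the statement. Note that, since any witnessing sequence lies in $V$, its limiting direction automatically lies in $V$, so the hypothesis $x_{\infty}\in V$ merely fixes which $x_{\infty}$ is being considered; and then $Ker(A)^{1/3}$ is the line through $x_{\infty}$, which lies in $V$, so all hypotheses of Theorem \ref{Theorem2Bis} hold for this $x_{\infty}$ and this $V$.

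It then remains to invoke Theorem \ref{Theorem2Bis} with $V=Im(A)$: the existence of a sequence $x_n\in V$ with $\|x_n\|\to\infty$, $x_n/\|x_n\|\to x_{\infty}$ and $x_n+A(x_n^3)$ bounded is equivalent to the algebraic conditions listed in part 2) of that theorem. Concatenating the chain $F_A$ not invertible $\Longleftrightarrow$ $F_A$ not proper $\Longleftrightarrow$ $\widehat{F}_A$ not proper on $V=Im(A)$ $\Longleftrightarrow$ such a sequence exists $\Longleftrightarrow$ the algebraic conditions hold, yields the theorem. The step I expect to be most delicate is the direction-pinning: one must verify that \emph{every} witnessing unbounded sequence has limiting direction equal, after the permutation-and-diagonal normalization, to the prescribed $x_{\infty}$ (handling the sign choices in extracting cube roots and the bookkeeping of the vanishing coordinates), and check that this normalization is consistent with all the data — $V$, the projection $pr$, and the decomposition $x=pr(x)+\widehat{x}$ — appearing in Theorem \ref{Theorem2Bis}. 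Once this is in place the result is a direct citation of Theorems \ref{TheoremHadamard} and \ref{Theorem2Bis} and Proposition \ref{MainTheorem}.
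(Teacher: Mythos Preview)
Your proposal is correct and follows the same approach the paper has in mind: the paper gives no explicit proof, only the sentence ``Applying Theorem \ref{Theorem2Bis} we obtain the following result,'' and you have spelled out exactly the chain of reductions that makes this citation work --- Hadamard (Theorem \ref{TheoremHadamard}) to pass from non-invertibility to non-properness, part 3) of Proposition \ref{MainTheorem} together with $Im(AA^T)=Im(A)=V$ to reduce to $\widehat{F}_A$ on $V$, the corank-$1$ hypothesis to force any limiting direction into $Ker(A)^{1/3}=\mathbb{R}\cdot x_{\infty}$, and finally Theorem \ref{Theorem2Bis}. Your caution about the direction-pinning (the $\pm x_{\infty}$ ambiguity and the bookkeeping under $A\mapsto DAD^{-3}$) is the only nontrivial point, and it is harmless here since the algebraic conditions of Theorem \ref{Theorem2Bis} are invariant under $(x_{\infty},u,v)\mapsto(-x_{\infty},-u,-v)$.
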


\begin{remark}
We note that the algebraic conditions in 2) can be readily checked with specific examples. For example, the readers can try to work out with the example in Remark \ref{RemarkMap5Dim}..

\end{remark}

\begin{theorem} Let $A$ be a matrix. Let $V=Im(A)$. Assume that the Jacobian of $F_A(x)=x+(Ax)^3$ is nowhere zero. 

1) If $F_A(x)$ is not invertible, then $\mathcal{N}(x_{\infty})$ must be satisfied for some $x_{\infty}\in V$. 

2) If $\mathcal{S}(x_{\infty},V)$ is satisfied for some $x_{\infty}\in V$, then $F_A(x)$ is not invertible.
\label{TheoremNecessarySufficient}\end{theorem}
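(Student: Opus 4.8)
The plan is to deduce this from three ingredients already available: Hadamard's theorem (Theorem~\ref{TheoremHadamard}), part 3) of Proposition~\ref{MainTheorem}, and the three-part tautological theorem stated just above (relating sequences $x_n$ to $\mathcal{N}(x_\infty)$ and $\mathcal{S}(x_\infty,V)$). The first preliminary step is the linear-algebra identity $V=Im(A)=Im(AA^T)$: since $\mathbb{R}^m=Ker(A)\oplus Im(A^T)$, one has $Im(AA^T)=A(Im(A^T))=A(\mathbb{R}^m)=Im(A)$. This is precisely what makes the subspace $V$ appearing in the hypotheses coincide with the subspace $Im(AA^T)$ on which Proposition~\ref{MainTheorem}(3) tests properness.

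For part 1), I would argue as follows. Assume $F_A(x)$ is not invertible. Since $JF_A$ is nowhere zero, Hadamard's theorem (Theorem~\ref{TheoremHadamard}) gives that $F_A$ is not proper. By Proposition~\ref{MainTheorem}(3), the map $x+A(x^3)$ is then not proper on $Im(AA^T)=V$; unwinding the sequential characterisation of properness, there is a sequence $x_n\in V$ with $\|x_n\|\to\infty$ and $x_n+A(x_n^3)$ bounded. Passing to a subsequence we may assume $x_n/\|x_n\|\to x_\infty$; then $x_\infty\in V$ because $V$ is closed, and $\|x_\infty\|=1$, so $x_\infty\ne 0$. Applying part 1) of the tautological theorem above (with this $V$) shows that $\mathcal{N}(x_\infty)$ must be satisfied, which is the desired conclusion.

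For part 2), suppose $\mathcal{S}(x_\infty,V)$ holds for some $x_\infty\in V$, with $V=Im(A)$ (note $x_\infty\ne 0$ by Definition~3). Part 3) of the tautological theorem above produces a sequence $x_n\in V$ with $\|x_n\|\to\infty$, $x_n/\|x_n\|\to x_\infty$, and $x_n+A(x_n^3)$ bounded. In particular $x+A(x^3)$ is not proper on $V=Im(AA^T)$, so by Proposition~\ref{MainTheorem}(3) the map $F_A$ is not proper, and then by Hadamard's theorem (using again that $JF_A$ never vanishes) $F_A$ is not invertible.

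I expect no deep obstacle here: the statement is essentially a repackaging of Hadamard's theorem, Proposition~\ref{MainTheorem}(3), and the tautological theorem. The only points that need genuine care are the identity $Im(AA^T)=Im(A)$ (so that the subspace $V$ in the hypotheses is the correct one), the fact that the Jacobian hypothesis is invoked solely to pass between "invertible" and "proper" via Hadamard, and verifying that the limiting direction $x_\infty$ is genuinely nonzero (so that $\mathcal{N}(x_\infty)$ and $\mathcal{S}(x_\infty,V)$ are meaningful) and that the approximating sequences can be taken inside $V$, which is exactly what Proposition~\ref{MainTheorem}(3) and part 3) of the tautological theorem guarantee.
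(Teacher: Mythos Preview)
Your proposal is correct and is precisely the argument the paper has in mind: the paper gives no explicit proof of this theorem, treating it as an immediate consequence of Hadamard's theorem, Proposition~\ref{MainTheorem}(3), and the tautological three-part theorem stated just before (together with the identity $Im(A)=Im(AA^T)$, which the paper also uses elsewhere, e.g.\ in the remark constructing the $3\times 3$ examples). Your write-up simply unpacks these steps, and there is nothing to add.
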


\subsection{Properness of Identity plus other linear powers}  The ideas in the previous subsection can be applied to similar maps $F_{A,k}(x)=x\pm (Ax)^k$, where $k$ is a positive integer. 

We first look at the case $k=1$. In this case, the map becomes $F_{A,1}=(Id+A)x$. Hence, in this case we have a complete answer: $F_{A,1}$ is proper iff $Id+A$ is invertible. 

The results in the previous subsections can be extended to all maps $F_{A,2k+1}$ for $2k+1\geq 3$. 

Maps $F_{A,2k}$ can be treated similarly, we only need to take care of the fact that there can be two real $2k$-th root of a non-negative real number, and no real $2k$-th root of a negative real number. 

We note that if $x_{\infty}=(1,\ldots ,1)$, the vector $u$ and the matrix $A$ satisfies the assumptions of Theorem \ref{Theorem2}, then for all $k$ the proof of Theorem \ref{Theorem2} shows that the sequence 
\begin{eqnarray*}
x_n^{(k)}=\gamma _nx_{\infty}+\frac{1}{k\gamma _n}u,
\end{eqnarray*}
where $\gamma _n\rightarrow\infty$, satisfies: $x_n\in V$, $||x_n||\rightarrow \infty$, and $x_n+(Ax_n)^k$ is bounded. Hence, under this assumption, all maps $x+(Ax)^k$ are non-proper. Thus, for all $k$, there is a matrix $A$ so that $F_{A,k}(x)=x+(Ax)^k$ is non-proper.

\subsection{Conclusions and Acknowledgements} In this paper, inspired by \cite{liu} - where a proof of the Jacobian conjecture via properness of Druzkowski maps $x+(Ax)^3$ is asserted, we provide various necessary conditions and sufficient conditions concerning the properness of maps of the form $x+(Ax)^{k}$ on $\mathbb{R}^m$, in particular when $k=3$ and when $k$ is an even number. We find that for most of matrices $A$, the map $x+(Ax)^3$ is proper. A complete characterisation of the properness, in  the case where $A$ has corank $1$, is obtained. Extending this, we state Conjecture C, which asserts that properness of the map $x+(Ax)^3$ is equivalent to the non-existence of non-zero solutions of a specific system of polynomial equations of degree at most $3$. We provide some applications to the (real) Jacobian conjecture. Because of a result by Druzkowski \cite{druzkowski}, which states that all self-polynomials on $\mathbb{C}^m$ or $\mathbb{R}^m$ can be reduced to a map of the form considered in this paper, our results can be applied to all polynomial self-maps with real or complex coefficients.  We hope that this paper can stimulate the readers to check in detail the arguments in \cite{liu}, and to demonstrate that checking properness of Druzkowski maps on $\mathbb{R}^m$ is a viable approach toward solving the Jacobian conjecture.  
 
 The author would like to thank Jiang Liu for some explanations on his preprint \cite{liu}, as well as for his comments mentioned in the above. The author also would like to thank John Christian Ottem for his comments. This work is supported by Young Research Talents grant 300814 from Research Council of Norway.

\end{document}